\documentclass[11pt]{amsart}
\usepackage[latin1]{inputenc}
\usepackage{amsmath,amsthm, amscd, amssymb, amsfonts}
\usepackage[dvips]{graphicx}
\numberwithin{equation}{section}

\theoremstyle{plain}

\newtheorem{theorem}{Theorem}[section]
\newtheorem{corollary}[theorem]{Corollary}
\newtheorem{proposition}[theorem]{Proposition}
\newtheorem{lemma}[theorem]{Lemma}
\newtheorem{question}[theorem]{Question}

\theoremstyle{definition}

\theoremstyle{remark}
\newtheorem{remark}[theorem]{Remark}

\newcommand{\C}{{\mathcal C}}
\newcommand{\G}{{\mathcal G}}
\newcommand\id{\operatorname{id}}
\newcommand\Aut{\operatorname{Aut}}
\newcommand\Frat{\operatorname{Frat}}
\newcommand\ad{\operatorname{ad}}
\newcommand\Tr{\operatorname{Tr}}

\newcommand\co{\operatorname{co}}

\newcommand\GL{\operatorname{GL}}
\newcommand\cop{\operatorname{cop}}

\newcommand\Fit{\operatorname{Fit}}
\newcommand\op{\operatorname{op}}
\newcommand\Hom{\operatorname{Hom}}
\newcommand\Rep{\operatorname{Rep}}

\newcommand\Ind{\operatorname{Ind}}
\newcommand\Opext{\operatorname{Opext}}

\newcommand\FPdim{\operatorname{FPdim}}

\newcommand\vect{\operatorname{Vec}}

\newcommand\Irr{\operatorname{Irr}}
\newcommand\cd{\operatorname{c.d.}}

\begin{document}
\title[Irreducible degrees of fusion categories]{On fusion categories with few irreducible degrees}
\author{Sonia Natale}
\author{Julia Yael Plavnik}
\address{Facultad de Matem\'atica, Astronom\'\i a y F\'\i sica,
Universidad Nacional de C\'ordoba, CIEM -- CONICET, (5000) Ciudad
Universitaria, C\'ordoba, Argentina}
\email{natale@famaf.unc.edu.ar, plavnik@famaf.unc.edu.ar
\newline \indent \emph{URL:}\/ http://www.famaf.unc.edu.ar/$\sim$natale}
\thanks{This work was partially supported by CONICET, ANPCyT  and Secyt (UNC)} \subjclass{16T05, 18D10}
\keywords{Fusion category; semisimple Hopf algebra; irreducible degree}
\date{November 4, 2011}

\begin{abstract}
We prove some results on the structure of certain classes of
integral fusion categories and semisimple Hopf algebras under
restrictions on the set of its irreducible degrees.
\end{abstract}

\maketitle

\section{Introduction}

Let $k$ be an algebraically closed field of characteristic zero.
Let $\mathcal C$ be a fusion category over $k$.  That is, $\C$ is
a $k$-linear semisimple rigid tensor category  with finitely many
isomorphism classes of simple objects, finite-dimensional spaces
of morphisms, and such that the unit object $\textbf{1}$ of $\C$
is simple.

For example, if $G$ is a finite group, then the categories $\Rep G$ of its finite dimensional representations, and the category $\C(G, \omega)$ of $G$-graded vector spaces with associativity determined by the $3$-cocycle $\omega$, are fusion categories over $k$. More generally, if $H$ is a finite dimensional semisimple quasi-Hopf algebra over $k$, then the category $\Rep H$ of its finite dimensional representations is a fusion category.

\medbreak Let $\Irr (\C)$ denote the set of isomorphism classes of
simple objects in the fusion category $\C$.   In analogy with the
case of finite groups \cite{isaacs}, we shall use the notation
$\cd (\C)$ to indicate the set
\begin{equation*}\cd(\mathcal C) =  \{ \FPdim x|\, x \in \Irr (\C) \}.
\end{equation*} Here, $\FPdim x$ denotes the \emph{Frobenius-Perron dimension} of $x \in \Irr(\C)$. Notice that, when $\C$ is the representation category of a quasi-Hopf algebra, Frobenius-Perron dimensions coincide with the dimensions of the underlying vector spaces. In this case, we shall use the notation $\cd(\mathcal C) = \cd (H)$.

The positive real numbers $\FPdim x$, $x \in \Irr (\C)$, will be
called the \emph{irreducible degrees} of $\C$.

The fusion categories that we shall consider in this paper are all
\emph{integral}, that is, the Frobenius-Perron dimensions of
objects of $\C$ are (natural) integers. By \cite[Theorem
8.33]{ENO},  $\C$ is isomorphic to the category of representations
of some finite dimensional semisimple quasi-Hopf algebra.

\medbreak For a finite group $G$, the knowledge of the set $\cd(G)
= \cd(kG)$ gives in some cases substantial information about the
structure of $G$.   It is known, for instance, that if $|\cd (G)|
\leq 3$, then $G$ is solvable.

On the other hand, if $|\cd (G)| = 2$, say $\cd(G) = \{ 1, m \}$, $m \geq 1$, then either $G$ has an abelian normal subgroup of index $m$ or else $G$ is nilpotent of class $\leq 3$.
Furthermore, if $G$ is nonabelian, then $\cd(G) = \{ 1, p \}$ for some prime number $p$, if and only if  $G$ contains an abelian normal subgroup of index $p$ or the center $Z(G)$ has index $p^3$. See   \cite[Theorems  (12.11), (12.14) and (12.15)]{isaacs}.

\medbreak In the context of semisimple Hopf algebras, some results in the same spirit are known. A basic one is that of \cite{s-zhu}, which asserts that if $|\cd (H)| \leq 3$, then $G(H^*)$ is not trivial, in other words, $H$ has nontrivial characters of degree $1$.
A similar result appears in \cite[Theorem 2.2.3]{pqq}.

Further results, leading to classification theorems in some specific cases, appear in the work of Izumi and Kosaki \cite{IK} for Kac algebras, that is, Hopf $C^*$-algebras.

\medbreak In this paper we consider the general problem of
understanding the structure of a fusion category $\C$ after the
knowledge of $\cd (\C)$. For instance, it is well-known that
$\cd(\C) = \{ 1\}$ if and only if $\C$ is pointed, if and only if
$\C \simeq \C(G, \omega)$, for some $3$-cocycle $\omega$ on the
group $G = G(\C)$ of isomorphism classes of invertible objects of $\C$.

More specifically, we address the following question:

\begin{question}\label{question} Suppose $\cd(\C) = \{1, p\}$, with $p$ a prime number. What can be said about the structure of $\C$?  \end{question}

We treat mostly structural questions regarding nilpotency and solvability, in the sense introduced in \cite{gel-nik} and \cite{ENO2}. (A related question for semisimple Hopf algebras, that we shall not discuss in the present paper, was posed in \cite[Question 7.2]{Hopfss-IFUM}.)

\medbreak The notions of nilpotency and solvability of a fusion
category are related to the corresponding notions for
finite groups as follows: if $G$ is a finite group, then the
category $\Rep G$ is nilpotent or solvable if and only if $G$ is
nilpotent or solvable, respectively. On the dual side,  a pointed
fusion category $\C(G, \omega)$ is always nilpotent, while it is
solvable if and only if the group $G$ is solvable.

An important class of fusion categories, called \emph{weakly
group-theoretical} fusion categories, was introduced  and studied
in \cite{ENO2}. This generalized in turn the notion of a
group-theoretical fusion category of \cite{ENO}. By definition, $\C$ is
group-theoretical if it is Morita equivalent to a pointed fusion
category, and it is weakly group-theoretical if it is Morita equivalent to a nilpotent fusion
category. Every nilpotent or solvable fusion category
is weakly group-theoretical.

\medbreak With regard to Question \ref{question}, consider for instance the case where $\C = \Rep H$, for a semisimple Hopf algebra $H$.
A result in this direction is known in the case $p = 2$. It is shown in \cite[Corollary 6.6]{BN} that if $H$ is a semisimple Hopf algebra such that $\cd(H) \subseteq \{ 1, 2 \}$, then $H$ is upper semisolvable. Moreover, $H$ is necessarily cocommutative if $G(H^*)$ is of order $2$.
The proof of these results relies on a refinement of a theorem of Nichols and Richmond (\cite[Theorem 11]{NR}) given in \cite[Theorem 1.1]{BN}.

In the context of Kac algebras,  it is shown in \cite[Theorem IX.8 (iii)]{IK} that if $\cd (H^*) = \{1, p\}$ and in addition $|G(H)| = p$, then $H$ is a central abelian extension associated to an action of the cyclic group of order $p$ on a nilpotent group. In the recent terminology introduced in \cite{gel-nik}, this result implies that such a Kac algebra is \emph{nilpotent}. See Remark \ref{i-k}.

\medbreak The main results of this paper are summarized in the following theorem.

\begin{theorem} Let $\C$ be a fusion category over $k$. Then we have:

(i) Suppose $\C$ is weakly group-theoretical and has odd
dimension. Then $\C$ is solvable. (Proposition \ref{odd-wgt}.)

\medbreak Let $p$ be a prime number.

\medbreak (ii) Suppose $\C$ is odd-dimensional and $\cd(\C)
\subseteq \{p^m:\, m \geq 0\}$. Then $\C$ is solvable. (Theorem
\ref{braided-odd}.)

\medbreak (iii) Suppose $\cd(\C) \subseteq \{1, p\}$. Then $\C$ is
solvable in any of the following cases:
\begin{itemize}\item $\C = \C(G, \omega, \mathbb Z_p, \alpha)$ (a group-theoretical fusion category \cite{ENO}) and $G(\C)$ is of order $p$. (Corollary \ref{solv-gt}.)

\item $\C$ is a near-group category \cite{Siehler}. (Theorem \ref{solv-neargp}.)

\item $\C = \Rep H$, where $H$ is a semisimple quasitriangular Hopf algebra and $p = 2$. (Theorem \ref{solv-2}.)
\end{itemize}

\medbreak (iv) Let $H$ be a semisimple Hopf algebra such that
$\cd(H) \subseteq \{1, p\}$. Then $H^*$ is nilpotent in any of the
following cases:
\begin{itemize}\item $|G(H^*)| = p$ and $p$ divides $|G(H)|$. (Proposition \ref{dual-nilp}.)

\item $|G(H^*)| = p$ and $H$ is quasitriangular. (Proposition \ref{qt-1p}.)

\item $H$ is of type $(1, p; p, 1)$ as an algebra. (Proposition \ref{type1pp1}.)
\end{itemize}

(v) Let $H$ be a semisimple Hopf algebra such that  $\cd(H)
\subseteq \{ 1, 2\}$. Then we have:
\begin{itemize} \item $H$ is weakly group-theoretical, and furthermore, it is group-theoretical if $H = H_{\ad}$. (Theorem \ref{2-wgt}.)
\item The group $G(H)$ is solvable. (Corollary \ref{2-gdeh}.)
\end{itemize}

(vi) Let $H$ be a semisimple Hopf algebra of type $(1, p; p, 1)$
as an algebra. Then $H$ is isomorphic to a twisting of the group algebra $kN$, where either $p = 2$ and $N = \mathbb S_3$ or $p = 2^{\alpha -1}$, $\alpha > 1$, and $N$ is the affine group of the field $\mathbb F_{2^\alpha}$.
(Theorem \ref{twist-aff}.)
\end{theorem}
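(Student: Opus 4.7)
The plan is to combine Proposition \ref{type1pp1}, which already furnishes that $H^*$ is nilpotent, with the restricted algebra structure of $H$ coming from its irreducible degrees, so as to realise $H$ as an abelian Hopf algebra extension whose cohomological class is, up to a Drinfeld twist, that of a solvable group algebra.

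First I would record the fusion rules of $\Rep H$. Since $H$ is of type $(1,p;p,1)$, the simple objects of $\Rep H$ are the $p$ invertibles forming $\Gamma = G(H^*) \cong \mathbb{Z}_p$, plus a single non-invertible simple $V$ with $\FPdim V = p$. Because $V$ is the only simple of that dimension, $V \cong V^*$ and $V \otimes \chi \cong V$ for every $\chi \in \Gamma$; a Frobenius reciprocity computation then forces
\[
V \otimes V \;\cong\; \bigoplus_{\chi \in \Gamma} \chi \;\oplus\; (p-1)\,V.
\]

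Second I would extract a Hopf algebra extension. The inclusion $kG(H^*) = k\mathbb{Z}_p \hookrightarrow H^*$ dualises to a Hopf surjection $\pi : H \twoheadrightarrow k^{\mathbb{Z}_p}$, and putting $K := H^{\co \pi}$ gives an exact sequence of Hopf algebras
\[
k \longrightarrow K \longrightarrow H \longrightarrow k^{\mathbb{Z}_p} \longrightarrow k, \qquad \dim K = p+1.
\]
Nilpotency of $H^*$ (Proposition \ref{type1pp1}) forces $k\mathbb{Z}_p$ to be central in $H^*$, so dually the sequence above is cocentral. In particular it is an abelian extension in Masuoka's sense, classified by the corresponding $\Opext$ group.

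Third I would identify $K$ and the action. The fusion rule of Step~1, combined with Clifford theory for the restriction $V|_K$ and the dimension $p+1$ of $K$, forces $K$ to be the group algebra $kF$ of an abelian group $F$ of order $p+1$; moreover the induced action of $\mathbb{Z}_p$ on $\widehat{F}$ has only the trivial character as a fixed point and acts transitively on the remaining $p$ non-trivial characters. Taking $N := F \rtimes \mathbb{Z}_p$, this identifies the fusion ring of $\Rep H$ with that of $\Rep N$. Since $F$ and $\mathbb{Z}_p$ are abelian, $N$ is metabelian, hence solvable. Finally, the class of $H$ in $\Opext(k\mathbb{Z}_p, kF)$, compared with the split class which realises $kN$, differs by a 2-cocycle; by standard cohomological manipulations this difference is realised by a Drinfeld twist $J \in kN \otimes kN$, giving $H \cong (kN)^J$ as Hopf algebras.

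The principal obstacle lies in Step~3: one must force $K$ to be \emph{commutative} and pin down the $\mathbb{Z}_p$-action on $\widehat F$ with the required orbit structure. For this, the nilpotency of $H^*$ has to be combined with the restrictive fusion rule $V \otimes V = \sum_{\chi} \chi + (p-1)V$ in order to eliminate nonabelian candidates for $F$ and to match the orbit structure on $\widehat F$ with the group $\Gamma$ of invertible simples of $\Rep H$. Once this is in place, solvability of $N$ and the final untwisting are formal.
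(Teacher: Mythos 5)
Your proposal only engages with part (v) (parts (i)--(iv) are just pointers to results proved elsewhere in the paper, so this is forgivable), and for (v) the skeleton is right: Proposition \ref{type1pp1} gives $G(H^*)\subseteq Z(H^*)$, hence a central abelian exact sequence $k\to k^{\mathbb Z_p}\to H^*\to kF\to k$ with $|F|=p+1$ (the quotient is a group algebra by Lemma \ref{cocommutative}), and the whole problem is to show $F$ is abelian and then untwist. But your mechanism for the key step fails. In your notation $K=H^{\co\pi}\cong\bigl(H^*/H^*(k\mathbb Z_p)^+\bigr)^*\cong k^F$, which is \emph{already commutative as an algebra}; consequently $V|_K$ splits into one-dimensional constituents no matter what $F$ is, and Clifford theory for the restriction $V|_K$ detects only the algebra structure of $K$. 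The statement ``$F$ is abelian'' is the \emph{cocommutativity} of $K$ (equivalently, commutativity of $kF$), a statement about the comultiplication that restriction of modules cannot see. Likewise your orbit claim is phrased for $\widehat F$, which is the wrong object until $F$ is known to be abelian. You flag this as the ``principal obstacle'' but never resolve it. The paper's resolution is genuinely group-theoretic and uses input you never establish: by Siehler's classification of near-group fusion rules (Proposition \ref{caso1sol}), $p=2^\alpha-1$ is a Mersenne prime and $|F|=2^\alpha$, so $F$ is a $2$-group carrying an automorphism of order $2^\alpha-1$; P.~Hall's bound on $|\Aut F|$ then forces $\Frat(F)=1$, and Wielandt's theorem $[F,F]\subseteq\Frat(F)$ gives $F$ abelian.

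The final untwisting is also not ``formal'': it is not true in general that an abelian extension differing from the split one ``by a $2$-cocycle'' is a Drinfeld twist of it. The paper uses the Kac exact sequence together with $\gcd(|F|,p)=1$ to show the class of $H^*$ in $\Opext(kF,k^{\mathbb Z_p})$ lies in the image of $H^2(F,k^{\times})$, and only then Masuoka's result identifies such classes with twists of the multiplication of the split extension $k^{\mathbb Z_p}\#kF\cong k^N$. (A positive remark: the orbit structure you mention can in fact be turned into an alternative proof that $F$ is abelian --- the $\mathbb Z_p$-action on $F$ itself, read off from the classification of irreducibles of the abelian extension and the type $(1,p;p,1)$, is simply transitive on $F\setminus\{e\}$, which forces $F$ to be elementary abelian since $Z(F)$ is characteristic and nontrivial --- but that argument lives on $F$, not $\widehat F$, and you would have to carry it out; as written the step is asserted, not proved.)
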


The proof of part (i) of the theorem is a consequence of the Feit-Thompson Theorem \cite{feit-thompson} that asserts that every finite group of odd order is solvable.

By \cite[Corollary 4.5]{Hopfss-IFUM}, the semisimple Hopf algebras
$H$ in part (iv) of the theorem are \emph{lower semisolvable} in
the sense of \cite{MW}.

\medbreak The results on semisimple Hopf algebras $H$ with $\cd(H) \subseteq \{ 1, 2\}$ rely on the results of the paper \cite{BN}.
We also make  strong use of several results of the
papers \cite{ENO2}, \cite{gel-nik} and \cite{gel-naidu} on weakly
group-theoretical, solvable and nilpotent fusion categories.

\medbreak The paper is organized as follows.  In Section \ref{two}
we recall the  main notions and results relevant to the problem we
consider. In particular, several properties of group-theoretical
fusion categories and Hopf algebra extensions are  discussed here.
The results on nilpotency are contained in Sections \ref{three}
and \ref{four}. The strategy in these sections consists in
reducing the problem to considering Hopf algebra extensions.
Sections \ref{solvability}, \ref{six} and \ref{seven} are devoted
to the solvability question in different situations.

\section{Preliminaries}\label{two}

\subsection{Fusion categories}\label{fusion}

A \emph{fusion category} over $k$ is a $k$-linear semisimple rigid
tensor category $\C$ with finitely many isomorphism classes of
simple objects, finite-dimensional spaces of morphisms, and such
that the unit object $\textbf{1}$ of $\C$ is simple. We refer the
reader to \cite{BK, ENO} for basic definitions and facts
concerning fusion categories. In particular, if $H$ is a
semisimple (quasi-)Hopf algebra over $k$, then $\Rep H$ is a
fusion category.

A \emph{fusion subcategory} of a fusion category $\C$
is a full tensor subcategory $\C^{'} \subseteq \C$ such
that if $X \in \C$ is isomorphic to a direct summand of an object
of $\C^{'}$, then $X \in \C^{'}$.  A fusion subcategory is necessarily
rigid, so it is indeed a fusion category \cite[Corollary
F.7 (i)]{DGNO1}.

\medbreak A \emph{pointed fusion  category} is a fusion category where all  simple objects
are invertible. A pointed fusion category is equivalent to the
category $\C(G, \omega)$,  of finite-dimensional $G$-graded vector spaces with associativity
constraint determined by a cohomology class $\omega\in
H^3(G,k^×)$, for some finite group $G$.
In other words, $\C(G, \omega)$ is the category of representations
of the quasi-Hopf algebra $k^G$, with associator $\omega\in
(k^G)^{\otimes 3}$.

\medbreak The fusion subcategory \emph{generated} by a collection $\mathcal X$ of objects of $\C$ is the smallest
fusion subcategory containing $\mathcal X$.

If $\C$ is a fusion category, then the set of isomorphism classes of invertible objects of $\C$ forms a group, denoted $G(\C)$. The fusion
subcategory  generated by  the invertible objects of $\C$ is a fusion subcategory, denoted $\C_{pt}$; it is the maximal pointed subcategory of $\C$.

\medbreak Let $\Irr (\C)$ denote the set of isomorphism classes of simple objects in the fusion category $\C$.  The set $\Irr (\C)$ is a basis over $\mathbb Z$ of the Grothendieck  ring $\G(\C)$.

\subsection{Irreducible degrees}\label{irr-fusion} For  $x \in \Irr(\C)$, let $\FPdim x$ be its Frobenius-Perron dimension.
The positive real numbers $\FPdim x$, $x \in \Irr (\C)$ will be
called the \emph{irreducible degrees} of $\C$. These extend to a
ring homomorphism $\FPdim: \G(\C) \to \mathbb R$. When $\C$ is the
representation category of a quasi-Hopf algebra, Frobenius-Perron
dimensions coincide with the dimension of the underlying vector
spaces.

The set of \emph{irreducible degrees} of $\C$ is defined as
\begin{equation*}\cd(\mathcal C) =  \{ \FPdim x|\, x \in \Irr (\C) \}.
\end{equation*}

The category $\C$ is called \emph{integral} if $\cd(\C) \subseteq
\mathbb N$.

\medbreak If $X$ is any object of $\C$, then its class $x$  in
$\G(\C)$ decomposes as $x = \sum_{y \in \Irr (\C)} m(y, x) y$,
where  $m(y, x) = \dim \Hom(Y, X)$ is the multiplicity of $Y$ in
$X$, if $Y$ is an object representing the class $y \in \Irr(\C)$.

For all $x, y, z \in \G(\C)$, we have:
\begin{equation}\label{adj-c}m(x, yz) = m(y^*, z x^*) = m(y, x z^*). \end{equation}

Let $x \in \Irr (\C)$. The stabilizer of $x$ under left
multiplication by elements of $G(\C)$ in the Grothendieck ring
will be denoted by $G[x]$. So that an invertible element $g \in
G(\C)$ belongs to $G[x]$ if and only if $g x = x$.

In view of \eqref{adj-c}, for all $x \in \Irr (\C)$, we have
\begin{equation*}G[x]= \{ g \in G(\C):\, m(g, x x^*) > 0 \} =  \{
g \in G(\C):\, m(g, x x^*) = 1 \}.\end{equation*} In particular,
we have the following relation in $\G(\C)$:
\begin{equation*}x x^* = \sum_{g \in G[x]} g +
\sum_{y \in \Irr(\C),\, \FPdim y > 1} m(y, x x^*) y.\end{equation*}

\begin{remark} Notice that an object $g \in \C$ is invertible if and only if $\FPdim g = 1$.

\medbreak Suppose that $\C$ is an integral fusion category with
$|\cd (\C)| = 2$. That is, $\cd (\C) = \{1, d\}$ for some integer
$d > 1$. Then $d$ divides the order of $G[x]$ for all  $x \in
\Irr(\C)$ with $\FPdim x > 1$. In particular, $d$ divides the
order of $G(\C)$, and thus $G(\C) \neq 1$.

Indeed, if $x \in \Irr(\C)$ with $\FPdim x = d$, we have the
relation:
$$x x^* = \sum_{g\in G[x]} g + \underset{y \in \Irr(\C),\, \FPdim y = d}{\sum}
m(y, x x^*)y.$$ The claim follows by taking Frobenius-Perron
dimensions. \end{remark}

\subsection{Semisimple Hopf algebras} Let $H$ be a semisimple Hopf algebra over $k$. We next recall some
of the terminology and conventions from \cite{ssld} that will be
used throughout this paper.

\medbreak As an algebra, $H$ is isomorphic to a direct sum of full
matrix algebras
\begin{equation}\label{estructura} H \simeq k^{(n)} \oplus \oplus_{i = 1}^r
M_{d_i}(k)^{(n_i)},\end{equation} where $n = |G(H^*)|$. The
Nichols-Zoeller theorem \cite{NZ} implies that $n$ divides both
$\dim H$ and $n_i d_i^2$, for all $i = 1, \dots, r$.

\medbreak If we have an isomorphism as in \eqref{estructura},  we
shall say that $H$ is \emph{of type} $(1, n; d_1, n_1; \dots; d_r,
n_r)$ \emph{as an algebra}. If $H^*$ is of type $(1, n; d_1, n_1;
\dots; d_r,
n_r)$ as an algebra, we shall say that $H$ is  \emph{of type}
$(1, n; d_1, n_1; \dots; d_r,
n_r)$ \emph{as a coalgebra}.

\medbreak Let $V$ be an $H$-module. The \emph{character} of $V$ is
the element $\chi = \chi_V \in H^*$ defined by $\chi(h) =
\Tr_V(h)$, for all $h \in H$. For a character $\chi$, its
\emph{degree} is the integer $\deg \chi = \chi(1) = \dim V$. The
character $\chi_V$ is called irreducible if $V$ is irreducible.

\medbreak The set $\Irr(H)$ of irreducible characters of
$H$ spans a semisimple subalgebra $R(H)$ of $H^*$, called the
character algebra of $H$. It is isomorphic, under the map $V \to
\chi_V$, to the extension of scalars $k \otimes_{\mathbb Z}\G(\Rep
H)$ of the Grothendieck ring of the category $\Rep H$. In
particular, there is an identification $\Irr (H) \simeq \Irr (\Rep
H)$.

Under this identification, all properties listed in Subsection \ref{irr-fusion} hold true for characters.

In this context,  we have $G(\Rep H) = G(H^*)$. The stabilizer of
$\chi$ under left multiplication by elements in $G(H^*)$ will be
denoted by $G[\chi]$. By the Nichols-Zoeller
theorem \cite{NZ}, we have that $|G[\chi]|$ divides $(\deg
\chi)^2$.

\medbreak Following \cite[Chapter 12]{isaacs}, we shall denote
$\cd(H) = \cd(\Rep H)$. So that, $$\cd(H) = \{ \deg \chi|\, \chi
\in \Irr(H) \}.$$ In particular, if $H$ is of type $(1, n; d_1,
n_1; \dots; d_r, n_r)$ as an algebra, then $\cd(H) = \{ 1, d_1,
\dots, d_r\}$.

\medbreak There is a bijective correspondence between Hopf algebra
quotients of $H$ and standard subalgebras of $R(H)$, that is,
subalgebras spanned by irreducible characters of $H$. This
correspondence assigns to the Hopf algebra quotient $H \to
\overline H$ its character algebra $R(\overline H) \subseteq
R(H)$. See \cite{NR}.

\subsection{Group-theoretical categories}\label{group-ttic}

A group-theoretical fusion category is a fusion category Morita
equivalent to a pointed fusion category $\C(G, \omega)$. Such a
fusion category is equivalent to the category $\C(G, \omega, F,
\alpha)$ of $k_{\alpha}F$-bimodules in $\C(G, \omega)$, where $G$
is a finite group, $\omega$ is a $3$-cocycle on $G$, $F \subseteq
G$ is a subgroup and $\alpha \in C^2(F, k^{\times})$ is a
$2$-cochain on $F$ such that $\omega\vert_F = d\alpha$. A
semisimple Hopf algebra $H$ is called group-theoretical if the
category $\Rep H$ is group-theoretical.

\medbreak Let $\C = \C(G, \omega, F, \alpha)$ be a group-theoretical fusion category. Let also $\Gamma$ be a subgroup of $G$, endowed with a $2$-cocycle $\beta\in Z^2(\Gamma,k^×)$, such that
\begin{itemize}
\item The class $\omega|_ \Gamma$ is trivial;
\item $G = F\Gamma$;
\item The class $\alpha|_{F\cap\Gamma}\beta^{-1}|_{F\cap\Gamma}$ is non-degenerate.
\end{itemize}
Then there is an associated semisimple Hopf algebra $H$, such that the category $\Rep H$ is equivalent to $\C$. By \cite{ostrik}, equivalence classes subgroups $\Gamma$ of $G$ satisfying the conditions above, classify fiber functors $\C \mapsto \vect_k$; these correspond to the distinct Hopf algebras $H$.

\medbreak Let $\C = \C(G, \omega, F, \alpha)$ be a
group-theoretical fusion category. The simple objects of $\C$ are
classified by pairs $(s, U_s)$, where $s$ runs over a set of
representatives of the double cosets of $F$ in $G$, that is,
orbits of the action of $F$ in the space $F\backslash G$ of left
cosets of $F$ in $G$,  $F_s = F \cap sFs^{-1}$ is the stabilizer
of $s \in F\backslash G$, and $U_s$ is an irreducible
representation of the twisted group algebra $k_{\sigma_s}F_s$,
that is, an irreducible projective representation of $F_s$ with
respect to certain $2$-cocycle $\sigma_s$ determined by $\omega$.
See \cite[Theorem 5.1]{gel-naidu}.


\medbreak The irreducible representation $W_{(s, U_s)}$ corresponding to such a pair $(s, U_s)$ has dimension  \begin{equation}\dim W_{(s, U_s)} = [F: F_s] \dim U_s. \end{equation}

As a consequence, we have:

\begin{corollary}\label{irr-abext} The irreducible degrees of $\C(G, \omega, F, \alpha)$ divide the order of $F$. \end{corollary}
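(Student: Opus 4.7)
The proof is essentially a one-line consequence of the dimension formula for simple objects of $\C(G,\omega,F,\alpha)$ stated immediately before the corollary, combined with a classical fact about projective representations. Here is the plan.

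First, invoke the formula
\[
\dim W_{(s, U_s)} = [F:F_s] \dim U_s,
\]
where $(s, U_s)$ ranges over the parametrizing pairs described above and $F_s = F \cap sFs^{-1}$. Since $[F:F_s] = |F|/|F_s|$, it suffices to show that $\dim U_s$ divides $|F_s|$; indeed, writing $\dim U_s = |F_s|/m$ for the appropriate integer $m$, one obtains $\dim W_{(s,U_s)} = |F|/m$, which divides $|F|$.

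Second, verify that $\dim U_s \mid |F_s|$. By construction, $U_s$ is an irreducible module over the twisted group algebra $k_{\sigma_s} F_s$, equivalently an irreducible projective representation of $F_s$ with cocycle $\sigma_s$. The required divisibility is a classical theorem of Schur: the degree of every irreducible projective representation of a finite group divides the group order. (One standard way to see this is to lift $U_s$ to an ordinary irreducible representation of a finite central extension $\widetilde{F_s}$ of $F_s$ by a cyclic group $Z \subseteq k^{\times}$ on which $Z$ acts through a faithful character; then the degree divides $|\widetilde{F_s}|$ by Frobenius's theorem, and a standard argument using the action of $Z$ refines this to a divisor of $|\widetilde{F_s}|/|Z| = |F_s|$.)

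Combining the two steps,
\[
\dim W_{(s, U_s)} = [F:F_s] \dim U_s \quad \text{divides} \quad [F:F_s]\, |F_s| = |F|,
\]
which is exactly the claim of the corollary. The only non-trivial input is the divisibility statement for projective representations in step two; the rest is bookkeeping with the dimension formula.
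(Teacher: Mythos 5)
Your proof is correct and follows the same route the paper intends: the corollary is stated as an immediate consequence of the dimension formula $\dim W_{(s,U_s)} = [F:F_s]\dim U_s$, with the only extra input being the classical fact that the degree of an irreducible projective representation of $F_s$ divides $|F_s|$, exactly as you argue.
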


\begin{remark} A group-theoretical category $\C = \C(G, \omega, F, \alpha)$ is an integral fusion category. An explicit construction of a quasi-Hopf algebra $H$ such that $\Rep H \simeq \C$ was given in \cite{fs-indic}.

As an algebra, $H$ is a crossed product $k^{F\backslash G}\#_{\sigma}kF$, where $F\backslash G$ is the space of left cosets of $F$ in $G$ with the natural action of $F$, and $\sigma$ is a certain $2$-cocycle determined by $\omega$.

Irreducible representations of $H$, that is, simple objects of $\C$, can therefore be described using the results for
group crossed products in \cite{MW}: this is done in \cite[Proposition 5.5]{fs-indic}.
\end{remark}

By \cite[Theorem 5.2]{gel-naidu}, the group $G(\C)$ of invertible objects of $\C$ fits into an exact sequence \begin{equation}\label{inv-gt}1 \to \widehat F \to G(\C) \to K \to
1,\end{equation} where $K = \{ x \in N_G(F): \, [\sigma_x] = 1 \}$.

\subsection{Abelian extensions}\label{Abelian-extensions}

Suppose that $G = F\Gamma$ is an exact factorization of the finite
group $G$, where $\Gamma$ and $F$ are subgroups of $G$.
Equivalently, $F$ and $\Gamma$ form a \emph{matched pair} of
groups with the actions $\vartriangleleft : \Gamma \times F \to
\Gamma$, $\vartriangleright : \Gamma \times F \to F$, defined by
$sx = (x \vartriangleleft s)(x \vartriangleright s)$, $x \in F$,
$s \in \Gamma$. In this case, $G$ is isomorphic to the group $F\bowtie \Gamma$ defined as follows: $F\bowtie \Gamma = F \times \Gamma$,  with multiplication
$(x, s) (t, y) = (x(s \vartriangleright y), (s \vartriangleleft y) t)$, for all $x, y \in F$, $s, t \in \Gamma$.

Let $\sigma\in Z^2(F, (k^\Gamma)^{\times})$ and $\tau \in Z^2(\Gamma,
(k^F)^{\times})$ be normalized 2-cocycles with the respect to the actions afforded,
respectively, by $\vartriangleleft$ and $\vartriangleright$,
subject to appropriate compatibility conditions \cite{ma-ext}.

\medbreak The bicrossed product $H = k^\Gamma \,
{}^{\tau}\#_{\sigma}kF$ associated to this data is a semisimple
Hopf algebra.
There is an \emph{abelian} exact sequence
\begin{equation}\label{abelian-sec} k \to k^{\Gamma} \to H \to kF \to k. \end{equation} Moreover, every Hopf algebra $H$
fitting into such an exact sequence can be described in this way.
This gives a bijective correspondence between the equivalence
classes of Hopf algebra extensions \eqref{abelian-sec} associated to the matched pair $(F,
\Gamma)$ and a certain abelian group $\Opext (k^\Gamma, kF)$.

\begin{remark} \label{H-group-theoretical}
The Hopf algebra $H$ is group theoretical. In fact, we have an
equivalence of fusion categories $\Rep H \simeq \C(G, \omega, F,
1)$ \cite[4.2]{gp-ttic}, where $\omega$ is the $3$-cocycle on $G$
coming from the so called \emph{Kac exact sequence}.
\end{remark}

Irreducible representations of $H$ are classified by pairs $(s, U_s)$, where $s$ runs over a set of representatives of the orbits of the action of $F$ in $\Gamma$,  $F_s = F \cap sFs^{-1}$ is the stabilizer
of $s \in  \Gamma$, and $U_s$ is an irreducible representation of the twisted group algebra $k_{\sigma_s}F_s$, that is, an irreducible projective representation of $F_s$ with cocycle $\sigma_s$, where $\sigma_s(x, y) = \sigma(x, y)(s)$, $x, y \in F$, $s \in \Gamma$. See \cite{KMM}.

Note that, for all $s\in \Gamma$, the restriction of $\sigma_s: F
\times F \to k^{\times}$ to the stabilizer $F_s$ defines indeed a
$2$-cocycle on $F_s$.

\medbreak The irreducible representation corresponding to such a
pair $(s, U_s)$ is in this case of the form \begin{equation}W_{(s,
U_s)} : = \Ind_{k^{\Gamma}\otimes k F_s}^H s\otimes
U_s.\end{equation}


\subsection{Quasitriangular Hopf algebras}\label{quasitriangular}

Let $H$ be a finite dimensional Hopf algebra. Recall that $H$ is called \emph{quasitriangular} if there exists  an invertible element  $R \in H \otimes
H$, called an \emph{$R$-matrix}, such that
\begin{align*}& (\Delta \otimes \id)(R) = R_{13}R_{23},  \quad (\epsilon \otimes \id)(R) = 1, \\
& (\id \otimes \Delta)(R) = R_{13}R_{12},  \quad (\id \otimes \epsilon)(R) = 1, \\
& \Delta^{\cop}(h)  = R \Delta(h) R^{-1}, \quad \forall h \in H.\end{align*}

The existence of an $R$-matrix (also called a \emph{quasitriangular structure} in what follows) amounts to the category $\Rep H$ being a braided tensor category. See \cite{BK}.

For instance, the group algebra $kG$ of a finite group $G$ is a
quasitriangular Hopf algebra with $R = 1\otimes 1$. On the other
hand, the dual Hopf algebra $k^G$ admits a quasitriangular
structure if and only if $G$ is abelian.

If it exists, a quasitriangular structure in a Hopf algebra $H$
need not be unique.

\medbreak Another example of a quasitriangular Hopf algebra is the \emph{Drinfeld
double} $D(H)$ of $H$, where $H$ is any finite dimensional Hopf algebra. We have $D(H) = H^{* \cop}
\otimes H$ as coalgebras, with a canonical $R$-matrix $\mathcal R
= \sum_i h^i \otimes h_i$, where $(h_i)_i$ is a basis of $H$ and
$(h^i)_i$ is the dual basis.

As braided tensor categories, $\Rep D(H) = \mathcal Z(\Rep H)$ is isomorphic to the center of
the tensor category $\Rep  H$.

\medbreak Suppose $(H, R)$ is a quasitriangular Hopf algebra. There are Hopf algebra maps $f_R: {H^*}^{\cop} \to H$ and
$f_{R_{21}}:H^* \to H^{\op}$ defined by
$$f_R(p) = p(R^{(1)}) R^{(2)},
\quad f_{R_{21}}(p) = p(R^{(2)}) R^{(1)},$$ for all $p \in H^*$,
where $R = R^{(1)} \otimes R^{(2)} \in H \otimes H$.

We shall denote $f_R(H^*) = H_+$ and $f_{R_{21}}(H^*) = H_-$, respectively.
So that $H_+, H_-$ are Hopf subalgebras of $H$ and we have
$H_+ \simeq (H_-^*)^{\cop}$.

We shall also denote by $H_R = H_-H_+ = H_+H_-$ the minimal
quasitriangular Hopf subalgebra of $H$. See \cite{R}.

\medbreak By \cite[Theorem 2]{R}, the multiplication of $H$ determines a surjective Hopf algebra map $D(H_-) \to H_R$.

\medbreak A quasitriangular Hopf algebra
$(H, R)$ is called {\it factorizable} if the map $\Phi_R : H^* \to
H$ is an isomorphism, where
\begin{equation}\label{phir} \Phi_R (p) = p(Q^{(1)}) Q^{(2)}, \quad p \in H^*;
\end{equation}
here, $Q = Q^{(1)} \otimes Q^{(2)} = R_{21} R \in H \otimes H$ \cite{re-se}.

If on the other hand $\Phi_R = \epsilon 1$ (or equivalently,
$R_{21}R = 1 \otimes 1$), then $(H, R)$ is called
\emph{triangular}. Finite dimensional triangular Hopf algebras
were completely classified in \cite{eg-triangular}. In particular, if $(H, R)$ is a semisimple quasitriangular Hopf algebra, then $H$ is isomorphic, as a Hopf algebra, to a twisting $(kG)^J$ of some finite group $G$.

It is well-known that the Drinfeld double $(D(H), \mathcal R)$ is
indeed a \emph{factori\-zable} quasitriangular Hopf algebra. We have $D(H)_+ = H$, $D(H)_- =
{H^*}^{\cop}$.

\medbreak We shall use later on in this paper the following result
about factorizable Hopf algebras. A categorical version is
established in \cite{gel-nik}.

\begin{theorem}\label{central-gplk}\cite[Theorem 2.3]{schneider}. Let $(H, R)$ be a factorizable Hopf algebra. Then the map $\Phi_R$ induces an isomorphism of groups $G(H^*) \to G(H)\cap Z(H)$.\end{theorem}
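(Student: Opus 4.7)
My plan is to exploit the decomposition of $\Phi_R$ in terms of the two canonical Hopf algebra maps from Subsection \ref{quasitriangular}. Expanding $Q = R_{21}R$ in Sweedler notation, a direct computation from \eqref{phir} gives
\[
\Phi_R(p) = f_{R_{21}}(p_{(1)})\,f_R(p_{(2)}),\qquad p\in H^*,
\]
i.e., $\Phi_R = m_H\circ(f_{R_{21}}\otimes f_R)\circ\Delta_{H^*}$. Since for $g\in G(H^*)$ one has $\Delta(g)=g\otimes g$, this specializes to $\Phi_R(g)=f_{R_{21}}(g)\,f_R(g)$, and as $f_R\colon H^{*\cop}\to H$ and $f_{R_{21}}\colon H^*\to H^{\op}$ are Hopf algebra maps, they each send $g$ to a grouplike element of $H$; hence $\Phi_R(g)\in G(H)$.

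The main computational step is to show $\Phi_R(g)\in Z(H)$. I would apply $g\otimes \id$ and $\id\otimes g$ separately to both sides of the quasitriangularity identity $R\,\Delta(h)=\Delta^{\cop}(h)\,R$, using that $g$ is a unital algebra map on $H$ to split expressions like $g(R^{(1)}h_{(1)})$ into $g(R^{(1)})\,g(h_{(1)})$. These two applications yield, respectively,
\[
f_R(g)\,[g(h_{(1)})h_{(2)}] = [g(h_{(2)})h_{(1)}]\,f_R(g),\qquad f_{R_{21}}(g)\,[g(h_{(2)})h_{(1)}] = [g(h_{(1)})h_{(2)}]\,f_{R_{21}}(g),
\]
for every $h\in H$. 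Composing the two identities produces $\Phi_R(g)\cdot[g(h_{(1)})h_{(2)}]=[g(h_{(1)})h_{(2)}]\cdot\Phi_R(g)$. Since $g$ is invertible in $H^*$ (its inverse in $G(H^*)$ being $g\circ S$), the linear map $h\mapsto g(h_{(1)})h_{(2)}$ is a bijection on $H$, which forces $\Phi_R(g)$ to commute with all of $H$, so $\Phi_R(g)\in Z(H)$.

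Once centrality is in hand, the group-homomorphism property of $\Phi_R|_{G(H^*)}\colon G(H^*)\to G(H)\cap Z(H)$ is immediate: for $g,h\in G(H^*)$, using that $f_R$ is an algebra map while $f_{R_{21}}$ is an anti-algebra map,
\[
\Phi_R(gh)=f_{R_{21}}(h)\,f_{R_{21}}(g)\,f_R(g)\,f_R(h)=f_{R_{21}}(h)\,\Phi_R(g)\,f_R(h)=\Phi_R(g)\,\Phi_R(h),
\]
the last equality using the centrality of $\Phi_R(g)$. Injectivity of $\Phi_R|_{G(H^*)}$ is automatic, since $\Phi_R$ is a linear bijection by the factorizability hypothesis.

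The main obstacle will be surjectivity: given $z\in G(H)\cap Z(H)$, one must show that the unique $g\in H^*$ with $\Phi_R(g)=z$ is a unital algebra map $H\to k$. The natural approach is to write down an explicit formula for $\Phi_R^{-1}$ (using an element of $H\otimes H$ dual to $Q$, for instance related to $R^{-1}R_{21}^{-1}$) and verify the multiplicativity of the resulting functional directly, combining the quasitriangularity identities with the centrality and grouplikeness of $z$. A more structural alternative is to realize $\Phi_R$, under factorizability, as an isomorphism of auxiliary Hopf-algebra-type structures on $H^*$ and $H$ (as in the transmutation/braided reconstruction theory for factorizable quasitriangular Hopf algebras), under which grouplikes on the source side correspond precisely to central grouplikes on the target. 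In either approach the essential input is the non-degeneracy of $Q=R_{21}R$ supplied by factorizability.
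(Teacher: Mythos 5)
You should first note that the paper does not prove this statement at all: it is imported verbatim from Schneider \cite[Theorem 2.3]{schneider}, so your attempt has to be measured against a complete proof of the theorem rather than against an argument in the text. The half you do carry out is correct and cleanly executed. The factorization $\Phi_R=m_H\circ(f_{R_{21}}\otimes f_R)\circ\Delta_{H^*}$ is the right way to expand the pairing with $Q=R_{21}R$; the two identities obtained by evaluating the character $g$ on either leg of $R\,\Delta(h)=\Delta^{\cop}(h)\,R$ check out, and composing them with the bijectivity of $h\mapsto g(h_{(1)})h_{(2)}$ (inverse given by $g\circ S$) does force $\Phi_R(g)\in Z(H)$; the homomorphism property and injectivity then follow as you say. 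It is worth observing that this half --- an injective group homomorphism $G(H^*)\to G(H)\cap Z(H)$ --- is in fact the only part of the theorem the paper uses in its applications (Propositions \ref{qt-1p} and \ref{type1pp1} only pass from a grouplike of $D(H)^*$ to a central grouplike of $D(H)$).

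The genuine gap is surjectivity, which you flag as ``the main obstacle'' but do not prove; as written, the proposal establishes strictly less than the stated theorem. Neither of your two suggested strategies is executed, and the first is misleading as formulated: the inverse of the linear map $p\mapsto(p\otimes\id)(Q)$ is a map $H\to H^*$ and hence corresponds to an element of $H^*\otimes H^*$, not of $H\otimes H$, so it cannot be ``pairing against $R^{-1}R_{21}^{-1}$'' (that element inverts $Q$ for the multiplication of $H\otimes H$, which is unrelated to inverting the induced linear map). The real difficulty is that your centrality computation only runs in the direction where $p$ is already known to be an algebra map: to show that $p:=\Phi_R^{-1}(z)$ is grouplike for $z\in G(H)\cap Z(H)$ one needs an identity valid for arbitrary $p$, e.g. $\Delta(\Phi_R(p))=(p\otimes\id\otimes\id)\bigl((R_{21})_{12}\,Q_{13}\,R_{12}\bigr)$ coming from $(\id\otimes\Delta)(Q)=(R_{21})_{12}Q_{13}R_{12}$, and must then use the centrality and grouplikeness of $z$ to strip off the conjugating $R$-factors before invoking injectivity of $\Phi_R\otimes\Phi_R$. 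Until that step is supplied (or delegated to \cite{schneider}, as the paper itself does), the proof is incomplete.
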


Note that we may identify $G(D(H)) = G(H^*) \times G(H)$. Under this identification, Theorem \ref{central-gplk} gives us a group isomorphism $G(D(H)^*) \to G(D(H)) \cap Z(D(H))$, such that $g \# f \mapsto f \# g$. See also \cite{R}.

In particular, if $f = \epsilon$, then $g \in G(H) \cap Z(H)$, and also if $g = 1$, then $f \in G(H^*)\cap Z(H^*)$.

\medbreak Suppose $(H, R)$ is a finite dimensional quasitriangular
Hopf algebra, and let $D(H)$ be the Drinfeld double of $H$.  Then
there is a surjective Hopf algebra map $f: D(H) \to H$, such that $(f\otimes f)
\mathcal R = R$. The map $f$ is determined by $f(p\otimes h) = f_R(p)h$, for all $p \in H^*$, $h \in H$.

This corresponds to the canonical inclusion of the braided tensor category $\Rep H$ (with
braiding determined by the action of the $R$-matrix) into its
center.

In particular, in the case where $H$ is quasitriangular, the group $G(H^*)$ can be identified with a subgroup of $G(D(H)^*)$.

\section{Nilpotency}\label{three}

Let $G$ be a finite group. A \emph{$G$-grading} of a fusion category $\C$
is a decomposition of $\C$ as a direct sum of full abelian
subcategories $\C = \oplus_{g \in G} \C_g$, such that $\C_g^* =
\C_{g^{-1}}$ and  the tensor product $\otimes: \C \times \C \to
\C$ maps $\C_g \times \C_h$ to $\C_{gh}$.  The neutral component
$\C_e$ is thus a fusion subcategory of $\C$.

The grading is called \emph{faithful} if $\C_g \neq 0$, for all $g
\in G$. In this case, $\C$ is called a \emph{$G$-extension} of
$\C_e$ \cite{ENO2}.

\medbreak The following proposition is a consequence of
\cite[Theorem 3.8]{gel-nik}.

\begin{proposition}\label{grading-hopf} Let $\C = \Rep H$, where
$H$ is a semisimple Hopf algebra.  Then a faithful $G$-grading on
$\C$ corresponds to a \emph{central} exact sequence of Hopf
algebras $k \to k^G \to H \to \overline H \to k$, such that $\Rep
\overline H = \C_e$. \end{proposition}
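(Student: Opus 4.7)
The plan is to translate \cite[Theorem 3.8]{gel-nik}, which characterizes faithful $G$-gradings of an arbitrary fusion category $\C$ via certain Tannakian subcategories of the Drinfeld center $Z(\C)$, into Hopf-algebraic data in the case $\C = \Rep H$; both directions of the resulting correspondence can be made explicit.

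For the direction from a central extension to a grading, starting from a central exact sequence $k \to k^G \to H \to \overline H \to k$, I would write $k^G = \bigoplus_{g \in G} k\, e_g$ for the orthogonal central idempotents $e_g$. Because $k^G \subseteq Z(H)$, each $H$-module $V$ decomposes canonically as $V = \bigoplus_g e_g V$, and morphisms preserve this decomposition. I would then define $(\Rep H)_g$ to be the full subcategory of modules on which $e_g$ acts as the identity, and check that this yields a faithful $G$-grading: since $k^G \hookrightarrow H$ is a Hopf subalgebra, $\Delta_H(e_g) = \sum_{hk=g} e_h \otimes e_k$, which implies $(\Rep H)_g \otimes (\Rep H)_h \subseteq (\Rep H)_{gh}$; faithfulness is equivalent to the injectivity of $k^G \hookrightarrow H$; and the neutral component consists of the modules annihilated by $(k^G)^+$, which is exactly $\Rep \overline H$.

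For the converse direction, given a faithful $G$-grading on $\Rep H$, for each $g \in G$ the projection $p_g$ onto the homogeneous component of degree $g$ defines a natural endomorphism of the identity functor on $\Rep H$. Via the standard Tannakian identification $\End(\id_{\Rep H}) \cong Z(H)$, the family $\{p_g\}_{g\in G}$ corresponds to central orthogonal idempotents $e_g \in Z(H)$ summing to $1$, which span a subalgebra isomorphic to $k^G$. The compatibility of the grading with tensor products translates into $\Delta_H(e_g) = \sum_{hk=g} e_h \otimes e_k$, so $k^G \hookrightarrow H$ is a Hopf subalgebra; centrality then ensures that $\overline H := H/H(k^G)^+$ is a Hopf quotient, yielding the required central exact sequence with $\Rep \overline H = \C_e$.

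The main point requiring care will be the Tannakian identification of natural endomorphisms of $\id_{\Rep H}$ with $Z(H)$, together with verifying that tensor-compatibility of the grading encodes precisely the coalgebra structure of $k^G$ inside $H$. Once this dictionary is in place the remaining checks are routine bookkeeping, and the proposition follows as a direct specialization of \cite[Theorem 3.8]{gel-nik} to $\C = \Rep H$.
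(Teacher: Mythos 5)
Your argument is correct and is essentially the intended one: the paper gives no proof of this proposition beyond citing \cite[Theorem 3.8]{gel-nik}, and your idempotent dictionary (central Hopf subalgebra $k^G \subseteq Z(H)$ $\leftrightarrow$ grading projections in $\End(\id_{\Rep H}) \cong Z(H)$) is precisely the standard unwinding of that result for $\C = \Rep H$. The one step worth writing out is the identity $\Delta(e_g) = \sum_{hk = g} e_h \otimes e_k$ in the converse direction, which follows because both sides are elements of the semisimple algebra $H \otimes H$ acting identically on every simple module $V \otimes W$ with $V, W$ homogeneous.
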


Let $\C$ be a fusion category and let $\C_{\ad}$   be the adjoint
subcategory of $\C$. That is, $\C_{\ad}$ is the fusion subcategory
of $\C$ generated by $X \otimes X^*$, where $X$ runs through the
simple objects of $\C$.

It is shown in \cite{gel-nik} that there is a canonical faithful
grading on $\C$: $\C = \oplus_{g \in U(\C)}\C_g$, called the
\emph{universal grading}, such that $\C_e = \C_{\ad}$. The group
$U(\C)$ is called the \emph{universal grading group} of $\C$.

\medbreak In the case where $\C = \Rep H$, for a  semisimple Hopf
algebra $H$, $K = k^{U(\C)}$ is the maximal central Hopf
subalgebra of $H$ and $\C_{\ad} = \Rep H/HK^+$ \cite[Theorem 3.8]{gel-nik}.



\medbreak Recall from \cite{gel-nik, ENO2} that a fusion category
$\C$ is called (cyclically) \emph{nilpotent} if  there is a sequence of fusion
categories $$\C_0 = \vect, \C_1, \dots , \C_n = \C$$ and a
sequence $G_1, \dots , G_n$ of finite (cyclic) groups such that $\C_i$ is
faithfully graded by $G_i$ with trivial component $\C_{i-1}$.

\medbreak The semisimple Hopf algebra $H$ is called nilpotent if
the fusion category $\Rep H$ is nilpotent \cite[Definition
4.4]{gel-nik}.

For instance, if $G$ is a finite group, then the dual group algebra $k^G$ is always nilpotent. However, the group algebra $kG$ is nilpotent if and only if the group $G$ is nilpotent \cite[Remark 4.7. (1)]{gel-nik}.

\subsection{Nilpotency of an abelian extension}

It is shown in \cite[Corollary 4.3]{gel-naidu} that a
group-theoretical fusion category $\C(G, \omega, F, \alpha)$  is
nilpotent if and only if the normal closure of $F$ in $G$ is
nilpotent. On the other hand, this happens if and only if $F$ is nilpotent and  subnormal in
$G$, if and only if $F \subseteq \Fit(G)$, where $\Fit(G)$ is the
Fitting subgroup of $G$, that is, the unique
largest normal nilpotent subgroup of $G$ \cite[Subsection 2.3]{gel-naidu}.

Combined with Remark \ref{H-group-theoretical}, this implies:

\begin{proposition}\label{abelian-nilpotent} Let $k \to k^{\Gamma} \to H \to kF \to k$ be an abelian exact sequence
and let $G = F\bowtie \Gamma$ be the associated factorizable
group. Then $H$ is nilpotent if and only if $F\subseteq \Fit(G)$.
\end{proposition}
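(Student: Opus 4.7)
The plan is to reduce the proposition to the characterization of nilpotency for group-theoretical fusion categories cited just before the statement, via the fact that abelian extensions produce group-theoretical categories of a very specific shape.

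First I would unpack the hypothesis. The abelian exact sequence $k \to k^\Gamma \to H \to kF \to k$ is classified by the matched pair data on $(F,\Gamma)$ together with a class in $\Opext(k^\Gamma, kF)$, and the associated factorizable group $G = F\bowtie \Gamma$ is exactly the group built from the matched pair actions $\vartriangleleft, \vartriangleright$ recalled in Subsection~\ref{Abelian-extensions}. By definition, $H$ is nilpotent if and only if the fusion category $\Rep H$ is nilpotent.

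Next I would invoke Remark~\ref{H-group-theoretical}, which identifies $\Rep H$ with the group-theoretical fusion category $\C(G,\omega,F,1)$, where $\omega \in Z^3(G,k^\times)$ is the $3$-cocycle produced by the Kac exact sequence from the extension class of $H$. Thus the nilpotency question for $H$ is equivalent to the nilpotency question for the group-theoretical category $\C(G,\omega,F,1)$.

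Finally I would apply the result of \cite[Corollary~4.3]{gel-naidu}, recalled in the sentence preceding the statement: a group-theoretical fusion category $\C(G,\omega,F,\alpha)$ is nilpotent if and only if the normal closure of $F$ in $G$ is nilpotent, equivalently if and only if $F \subseteq \Fit(G)$. Specializing to $\alpha = 1$ and the group $G = F\bowtie \Gamma$ gives exactly the stated equivalence. The whole argument is essentially a two-line translation, so I do not expect any real obstacle; the only thing worth double-checking is that the group $G$ appearing in Remark~\ref{H-group-theoretical} is genuinely $F\bowtie \Gamma$ (with $F$ embedded as the subgroup fixed by the construction), and not, say, $\Gamma\bowtie F$, so that the subgroup figuring in the group-theoretical datum matches the subgroup $F$ in the hypothesis of the proposition.
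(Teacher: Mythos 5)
Your proof is correct and is exactly the paper's argument: the authors derive the proposition by combining Remark~\ref{H-group-theoretical} (which identifies $\Rep H$ with $\C(G,\omega,F,1)$ for $G = F\bowtie\Gamma$) with the criterion from \cite[Corollary 4.3]{gel-naidu} that such a group-theoretical category is nilpotent if and only if $F\subseteq \Fit(G)$. Your closing caution about checking that $F$ (and not $\Gamma$) is the subgroup appearing in the group-theoretical datum is exactly the right point to verify, and it holds by the convention of Remark~\ref{H-group-theoretical}.
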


\medbreak An abelian exact sequence \eqref{abelian-sec} is called \emph{central} if the image of $k^{\Gamma}$ is a central Hopf subalgebra of $H$. It is called cocentral, if the dual exact sequence is central.

The following facts are well-known:

\begin{lemma}\label{lema-central} Consider an abelian exact sequence \eqref{abelian-sec}. Then we have:

\begin{itemize}\item[(i)]  The sequence is central if and only if the action $\vartriangleleft: \Gamma \times F \to \Gamma$ is trivial. In this case, the group $G = F\bowtie\Gamma$ is a semidirect product $G \simeq F \rtimes \Gamma$ with respect to the action $\vartriangleright: \Gamma \times F \to F$.

\item[(ii)] The sequence is cocentral if and only if the action $\vartriangleright: \Gamma \times F \to F$ is trivial. In this case, the group $G = F\bowtie\Gamma$ is a semidirect product $G \simeq F \ltimes \Gamma$ with respect to the action $\vartriangleleft: \Gamma \times F \to \Gamma$.\qed \end{itemize}  \end{lemma}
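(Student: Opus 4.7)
The strategy is to unpack the construction of the bicrossed product $H = k^\Gamma \,{}^{\tau}\#_{\sigma}kF$ recalled in Subsection \ref{Abelian-extensions}, and inspect directly when the natural Hopf subalgebra $k^\Gamma \hookrightarrow H$ lies in the center of $H$. For part (i), I would identify $k^\Gamma$ with $k^\Gamma \# 1 \subseteq H$ and write elements of $kF$ as $1 \# x$, $x \in F$, and then compute the products $(1\#x)(e_s \# 1)$ and $(e_s \# 1)(1\#x)$ for $s \in \Gamma$ using the multiplication rule of the bicrossed product. The cocycle $\sigma$ takes values in $(k^\Gamma)^{\times}$ and so contributes only a factor that commutes with $k^\Gamma$; the cocycle $\tau$ is relevant only to the comultiplication. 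Thus the two products agree for all $x$ and $s$ if and only if the $F$-action on $k^\Gamma$ induced by the matched pair datum $\vartriangleleft$ is trivial, which, since this action is precisely $x\cdot e_s = e_{x \vartriangleleft s}$, is equivalent to $\vartriangleleft$ being trivial.

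Once $\vartriangleleft$ is trivial, the matched pair relation $sx = (x \vartriangleleft s)(x \vartriangleright s)$ collapses to $sx = s\,(x \vartriangleright s)$, so $s^{-1}xs = (x \vartriangleright s) \in F$ for every $x \in F$ and $s \in \Gamma$. This says that $F$ is normal in $G = F\Gamma$, and since $F \cap \Gamma = 1$, we obtain $G \simeq F \rtimes \Gamma$ with the action of $\Gamma$ on $F$ given by $\vartriangleright$, as asserted.

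Part (ii) will follow from part (i) by duality. The exact sequence \eqref{abelian-sec} is cocentral by definition if and only if the dual exact sequence $k \to k^F \to H^* \to k\Gamma \to k$ is central. This dual sequence is itself the abelian extension associated to the matched pair obtained by swapping the roles of $F$ and $\Gamma$, under which the actions $\vartriangleleft$ and $\vartriangleright$ are interchanged. Applying (i) to this swapped matched pair yields that the sequence is cocentral if and only if $\vartriangleright$ is trivial, and then the same matched pair computation shows $\Gamma$ is normal in $G$ with $F$ acting by $\vartriangleleft$, i.e., $G \simeq F \ltimes \Gamma$.

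No step presents a real obstacle; the only thing to be careful about is keeping the bicrossed product conventions and the direction of the actions $\vartriangleleft$, $\vartriangleright$ straight throughout the computation, which is the reason the lemma is stated as well-known.
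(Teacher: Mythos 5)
Your argument is correct, and in fact the paper offers no proof of this lemma at all: it is labelled as a collection of well-known facts and closed with an immediate \qed, so the standard computation you supply is exactly what is being taken for granted. Your route --- identifying $k^\Gamma$ with $k^\Gamma\#1$ inside the crossed product, checking commutation only against the algebra generators $1\#x$ (which suffices since $e_s\#x=(e_s\#1)(1\#x)$ and $k^\Gamma$ is commutative), observing that the normalized cocycle $\sigma$ drops out of the products $(1\#x)(e_s\#1)$ and $(e_s\#1)(1\#x)$, and then dualizing for part (ii) by swapping the roles of $F$ and $\Gamma$ --- is the expected one and each step goes through. One small point to tidy: as literally displayed in the paper, the relation $sx=(x\vartriangleleft s)(x\vartriangleright s)$ with $\vartriangleleft$ trivial gives $sx=s(x\vartriangleright s)$, from which cancelling $s$ on the left yields $x\vartriangleright s=x$, not $s^{-1}xs=x\vartriangleright s$ as you write. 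The matched pair relation should read $xs=(x\vartriangleleft s)(x\vartriangleright s)$ (the product $xs\in F\Gamma$ rewritten in $\Gamma F$ form); with that correction your identity $x\vartriangleright s=s^{-1}xs$ does follow, and with it the normality of $F$, the decomposition $G\simeq F\rtimes\Gamma$, and the dual statement in (ii). This is an infelicity inherited from the paper's own formula rather than a gap in your reasoning, but it is worth fixing before relying on the conjugation formula.
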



\begin{remark}\label{central-nilpotent} Assume the exact sequence \eqref{abelian-sec} is central. Then $F$ is a normal subgroup of $G$. It follows from Proposition \ref{abelian-nilpotent} that $H$ is nilpotent if and only if  $F$ is nilpotent. \end{remark}

\section{On the nilpotency of a class of semisimple Hopf algebras}\label{four}

Let $p$ be a prime number. We shall consider in this subsection a nontrivial semisimple Hopf
algebra $H$ fitting into an abelian exact sequence
\begin{equation}\label{abext-1p}k \to k^{\mathbb Z_p} \to H \to kF
\to k.\end{equation}
The main result of this subsection is Proposition \ref{Fnilp} below.

\medbreak We first have the following lemma.

Suppose that $\C$ is any group-theoretical fusion category of the
form $\C = \C(G, \omega, \mathbb Z_p, \alpha)$ (Note that we may
assume that $\alpha = 1$.) In particular, $p$ divides the order of
$G(\C)$. We also have $\cd(\C) \subseteq \{ 1, p\}$, by Corollary
\ref{irr-abext}.

\begin{lemma}\label{Fnilp-gt} Let $\C = \C(G, \omega, \mathbb Z_p, \alpha)$. Assume that   $|G(\C)| = p$. Then  $G$ is a Frobenius group with Frobenius complement $\mathbb Z_p$.
 \end{lemma}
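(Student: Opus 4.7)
I would read the hypothesis $|G(\C)|=p$ through the classification of simple objects in $\C = \C(G,\omega,\mathbb Z_p,\alpha)$ recalled in Subsection \ref{group-ttic}, and extract the Frobenius condition directly from a coset count. Writing $F = \mathbb Z_p$, the simple objects of $\C$ are pairs $(s,U_s)$ with $s$ a representative of an $F$-$F$ double coset in $G$ and $U_s$ an irreducible projective representation of the stabilizer $F_s = F \cap sFs^{-1}$ with cocycle $\sigma_s$, of dimension $[F:F_s]\dim U_s$. Since $|F|=p$ is prime, $F_s \in \{1,F\}$, and $F_s = F$ if and only if $s \in N_G(F)$.

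The key simplification is that the Schur multiplier $H^2(\mathbb Z_p, k^\times)$ is trivial, so every class $[\sigma_s]$ is trivial and every irreducible projective representation of $F_s$ is one-dimensional. Hence the invertible simple objects come exactly from double cosets $FsF \subseteq N_G(F)$, each contributing $|\widehat F| = p$ one-dimensional simples; and since $F$ is normal in $N_G(F)$, these double cosets are in bijection with $N_G(F)/F$. This yields the count
\[
|G(\C)| \;=\; p \cdot |N_G(F)/F|,
\]
which also agrees with the exact sequence \eqref{inv-gt} of \cite{gel-naidu}, once one notes that in the present setting the subgroup $K$ there coincides with all of $N_G(F)/F$.

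The hypothesis $|G(\C)| = p$ therefore forces $N_G(F) = F$. To conclude, for any $g \in G \setminus F$ the intersection $F \cap gFg^{-1}$ is a subgroup of the cyclic group $F$ of prime order, so it is either trivial or all of $F$; the latter would place $g$ in $N_G(F) = F$, a contradiction. Hence $F \cap gFg^{-1} = 1$ for every $g \notin F$, which is exactly the property making $F = \mathbb Z_p$ a Frobenius complement in $G$. The argument has no real obstacle; the only point worth flagging is that the triviality of the Schur multiplier of $\mathbb Z_p$ is what collapses the cocycle bookkeeping in \eqref{inv-gt} and reduces the problem to a plain coset count.
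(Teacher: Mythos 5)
Your proposal is correct and follows essentially the same route as the paper: both read off the invertible objects from the classification of simples in $\C(G,\omega,F,\alpha)$ and deduce from $|G(\C)|=p$ that $gFg^{-1}\cap F=\{e\}$ for all $g\notin F=\mathbb Z_p$, which is the Frobenius condition. You merely make explicit the details the paper leaves implicit, namely the triviality of $H^2(\mathbb Z_p,k^\times)$ and the resulting count $|G(\C)|=p\,[N_G(F):F]$.
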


\begin{proof}
The description of the irreducible representations of $\C$ in Subsection \ref{group-ttic}, combined with the assumption that $|G(\C)| = p$,
implies that $g\,\mathbb{Z}_p \,g^{-1}\cap
\mathbb{Z}_p = \{e\}$, for all $g \in G\backslash \mathbb Z_p$. (In particular, the action of $\mathbb Z_p$ on $\mathbb Z_p\backslash G$ has no fixed
points $s \neq e$.)

This condition says that $G$ is a Frobenius group
with Frobenius complement $\mathbb Z_p$, as claimed. \end{proof}

\begin{remark}\label{rmk-gt} Let $G$ be a Frobenius group with Frobenius complement $\mathbb Z_p$,  as in Lemma \ref{Fnilp-gt}. By Frobenius' Theorem we
have that  the Frobenius kernel $N$ is a normal subgroup of $G$,
such that $G$ is a semidirect product $G = N\rtimes \mathbb{Z}_p$.
Moreover, $N$ is a nilpotent group, by a theorem of Thompson. See
\cite[Theorem 10.5.6]{robinson}, \cite[Theorem (7.2)]{isaacs}. In fact, the Frobenius kernel $N$ is equal to $\Fit(G)$, the Fitting subgroup of $G$ \cite[Exercise 10.5.8]{robinson}. \end{remark}

As a consequence we get the following:

\begin{proposition}\label{Fnilp} Consider the abelian exact sequence \eqref{abext-1p} and assume that   $|G(H)| = p$.
Then we have:
\begin{enumerate}\item[(i)] The sequence is central, that is, $G(H) \subseteq Z(H)$.
\item[(ii)] $G = F\bowtie \mathbb Z_p$ is a Frobenius group with kernel $F$. In particular, $F$ is nilpotent.
\end{enumerate}
 \end{proposition}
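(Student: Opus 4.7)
The strategy is to pass to the dual Hopf algebra and apply Lemma \ref{Fnilp-gt}. Dualizing the given extension yields an abelian exact sequence
\[
k \to k^F \to H^* \to k\mathbb{Z}_p \to k,
\]
associated with the same matched pair group $G = F \bowtie \mathbb{Z}_p$, so by Remark \ref{H-group-theoretical}, $\Rep H^* \simeq \C(G, \omega', \mathbb{Z}_p, 1)$ for some $3$-cocycle $\omega'$ on $G$. Using the identification $G(\Rep H^*) = G(H)$, the hypothesis $|G(H)| = p$ becomes $|G(\Rep H^*)| = p$, which is exactly the setting of Lemma \ref{Fnilp-gt}. Hence $G$ is a Frobenius group with Frobenius complement $\mathbb{Z}_p$.

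By Remark \ref{rmk-gt}, the Frobenius kernel $N = \Fit(G)$ is nilpotent, normal in $G$, and $G \simeq N \rtimes \mathbb{Z}_p$. To finish (ii) I must show that $F = N$. Both $F$ and $N$ have order $|G|/p$, which is coprime to $p$, so each is a Hall subgroup of $G$ for the set of primes different from $p$. Since $N$ is nilpotent, $G$ is solvable, and Hall's theorem yields that $F$ and $N$ are conjugate in $G$; the normality of $N$ then forces $F = N$. In particular $F$ is nilpotent, completing (ii).

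Part (i) is immediate from this: with $F$ normal in $G$, the exact factorization $G = F \bowtie \mathbb{Z}_p$ is a semidirect product $G = F \rtimes \mathbb{Z}_p$, and by Lemma \ref{lema-central}(i) this is equivalent to the centrality of the original sequence, i.e.\ $G(H) \subseteq Z(H)$.

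The one delicate point is the duality bookkeeping in the first paragraph: Lemma \ref{Fnilp-gt} is tailored to the case where the subgroup appearing in the group-theoretical model is $\mathbb{Z}_p$, whereas in our setup the corresponding subgroup for $\Rep H$ is $F$, so one must dualize to swap the two sides and reinterpret the hypothesis on $|G(H)|$ as a hypothesis on the invertibles of $\Rep H^*$. After that, the identification $F = N$ through Hall's theorem and the deduction of (i) from Lemma \ref{lema-central}(i) are routine.
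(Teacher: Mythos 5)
Your proposal is correct and follows the paper's own strategy for the main reduction: dualize to get the sequence $k \to k^F \to H^* \to k\mathbb{Z}_p \to k$, identify $\Rep H^*$ with $\C(G,\omega,\mathbb{Z}_p,1)$ via Remark \ref{H-group-theoretical}, translate $|G(H)|=p$ into $|G(\Rep H^*)|=p$, and apply Lemma \ref{Fnilp-gt} together with Remark \ref{rmk-gt} to get the Frobenius structure $G = N \rtimes \mathbb{Z}_p$ with $N = \Fit(G)$ nilpotent. The only point where you diverge is the identification $F = N$. The paper uses the hypothesis $|G(H)|=p$ once more to see that $\mathbb{Z}_p$ acts on $F$ without nontrivial fixed points, counts orbits to get $|F| \equiv 1 \pmod p$, and concludes that $F$ maps trivially to $G/N \simeq \mathbb{Z}_p$, hence $F \subseteq N$ and $F = N$ by cardinality. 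You instead assert that $|G|/p$ is coprime to $p$ and then invoke Hall's conjugacy theorem in the solvable group $G$. This works, but note two things: the coprimality of $|N|$ and $p$ is exactly the same orbit-counting fact (the complement acts fixed-point-freely on the kernel, so $|N|\equiv 1 \pmod p$), so it deserves at least a one-line justification rather than being folded into ``which is coprime to $p$''; and once you know $p \nmid |F|$, Hall's theorem is unnecessary --- the image of $F$ in $G/N \simeq \mathbb{Z}_p$ has order dividing both $|F|$ and $p$, hence is trivial, which already gives $F \subseteq N$ and then $F=N$. The deduction of part (i) from the normality of $F$ via Lemma \ref{lema-central}(i) is exactly the paper's.
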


\begin{proof}
We follow the lines of the proof of \cite[Proposition X.7 (i)]{IK}.
Consider the matched pair $(F, \mathbb{Z}_p)$ associated  to \eqref{abext-1p}, as in Subsection \eqref{Abelian-extensions}. Let $G = F \bowtie \mathbb Z_p$ be the corresponding factorizable group.

We have an equivalence of fusion categories $\Rep H^* \simeq \C(G, \omega, \mathbb Z_p, 1)$, see Remark \ref{H-group-theoretical}. Then $\Rep H^*$ is group-theoretical and by assumption, $G(\Rep H^*)$ is of order $p$. By Lemma \ref{Fnilp-gt}, $G$ is a Frobenius group
with Frobenius complement $\mathbb Z_p$. Therefore $G$ is a semidirect product $G = N\rtimes \mathbb{Z}_p$, where $N = \Fit(G)$ is a nilpotent subgroup (see Remark \ref{rmk-gt}).

\medbreak Since $|G(H)| = p$, then the action of $\mathbb Z_p$ on $F$ has no fixed points. It follows, after decomposing $F$ as a disjoint union of
$\mathbb Z_p$-orbits, that $|F| = 1 \, (\textrm{mod }p)$. In
particular, $|F|$ is not divisible by $p$. Then $F$ must map
trivially under the canonical projection $G \to G/N$, that is, $F
\subseteq N$. Hence $F = N$,  because they have the same order.
This shows (ii). Since $F$ is normal in $G$, we get (i) in view of
Lemma \ref{lema-central}.
\end{proof}

\begin{corollary}\label{Hnilp} Let $k \to k^{\mathbb Z_p} \to H \to kF \to k$ be an abelian exact sequence such  that   $|G(H)| = p$. Then $H$ is nilpotent. \end{corollary}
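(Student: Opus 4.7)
The proof is essentially immediate from the two parts of Proposition \ref{Fnilp} combined with Remark \ref{central-nilpotent}. The plan is as follows.

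First, I would invoke Proposition \ref{Fnilp}(i) to conclude that the exact sequence $k \to k^{\mathbb{Z}_p} \to H \to kF \to k$ is \emph{central}, so that the image of $k^{\mathbb{Z}_p}$ is a central Hopf subalgebra of $H$. Next, Proposition \ref{Fnilp}(ii) identifies $F$ with the Frobenius kernel of the factorized group $G = F \bowtie \mathbb{Z}_p$, and in particular tells us that $F$ is nilpotent (by Thompson's theorem, as recalled in Remark \ref{rmk-gt}).

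With these two ingredients in hand, I would apply Remark \ref{central-nilpotent} (which is itself an immediate consequence of Proposition \ref{abelian-nilpotent}): for a central abelian extension, nilpotency of $H$ is equivalent to nilpotency of $F$. Since $F$ is nilpotent, we conclude that $H$ is nilpotent, as required.

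There is no real obstacle here: the work has all been done in Proposition \ref{Fnilp}, and the corollary is just the combination of its two parts with the general criterion for nilpotency of central abelian extensions. The only thing to check is that the hypotheses of Remark \ref{central-nilpotent} are met, which is exactly the content of Proposition \ref{Fnilp}(i).
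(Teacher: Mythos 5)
Your proposal is correct and follows exactly the paper's own argument: the paper proves the corollary by citing Proposition \ref{Fnilp} together with Remark \ref{central-nilpotent}, which is precisely the combination you describe. You have merely spelled out the two ingredients (centrality of the sequence and nilpotency of $F$) more explicitly than the paper's one-line proof does.
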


\begin{proof} It follows from Proposition \ref{Fnilp}, in view of Remark \ref{central-nilpotent}. \end{proof}

\begin{remark}\label{i-k} In view of  \cite[Theorem IX.8 (iii)]{IK},  if $H$ is a Kac algebra with $\cd (H^*) = \{1, p\}$ and $|G(H)| = p$, then $H$ is a central abelian extension associated to an action of the cyclic group of order $p$ on a nilpotent group. It follows from Corollary \ref{Hnilp} that $H$ is a nilpotent Hopf algebra. \end{remark}

\begin{remark} Note that the (dual) assumption that $\cd (H) = \{ 1, p \}$ does not imply that $H$ is nilpotent in general.
For example, take $H$ to be the group algebra of a nonabelian
semidirect product $F \rtimes \mathbb Z_p$, where $F$ is an
abelian group such that $(|F|, p) = 1$.

\medbreak On the other hand, the assumption on $|G(H)|$ in
Corollary \ref{Hnilp} and Proposition \ref{Fnilp} is essential.
Namely, for all prime number $p$, there exist semisimple Hopf
algebras $H$ with $\cd(H^*) = \{  1, p\}$ and such that $H$ is
\emph{not} nilpotent.

To see an example, consider  a group $F$ with an automorphism of order $p$ and suppose $F$ is not nilpotent  (take for instance $F = \mathbb S_n$, a symmetric group, such that $n > 6$ is sufficiently large). Consider the corresponding action of $\mathbb Z_p$ on $F$ by group automorphisms and let $G = F \rtimes \mathbb Z_p$ be the semidirect product.

Then there is an associated (split) abelian exact sequence $k \to k^{\mathbb Z_p} \to H \to kF \to k$, such that $H$ is not commutative and not cocommutative. Moreover, in view of Remark \ref{irr-abext}, $\cd (H^*) = \{ 1, p\}$. But, by Remark \ref{central-nilpotent}, $H$ is not nilpotent, because $F$ is not nilpotent by assumption.
\end{remark}

\subsection{Reduction to abelian extensions from character degrees}

In this subsection we consider the case where $\cd (H) = \{1,p\}$ for some prime $p$ and $|G(H^*)| = p$. We treat the problem of deducing an abelian extension like \eqref{abext-1p} from this assumption.

It is known, for instance, that if $p = 2$, then the assumption implies that $H$ is cocommutative \cite[Proposition 6.8]{BN}, \cite[Corollary IX.9]{IK}.

\begin{lemma}\label{cocommutative} Suppose that $\cd (H^*) = \{1,p\}$ for some prime $p$.   Then $H/(kG(H))^+H$ is a cocommutative coalgebra. \end{lemma}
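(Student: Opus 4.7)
My plan is to pass to duals. Since $K^+H$ is a coideal and right ideal of $H$ (where $K = kG(H)$), the quotient $\bar H := H/K^+H$ is a coalgebra, and linear duality identifies its algebra dual with the subalgebra
\[
(H^*)^K \;\subseteq\; H^*
\]
of elements invariant under the right $K$-module structure $(f \cdot k)(x) := f(kx)$. Cocommutativity of $\bar H$ is therefore equivalent to commutativity of $(H^*)^K$, and it is this that I would prove.

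The next step is to decompose $H^* = \bigoplus_{\eta \in \Irr(H^*)} \End(V_\eta)$ as algebras. Because $g \in G(H)$ is group-like, left multiplication by $g$ is a coalgebra automorphism of $H$, so its dual $L_g^* : f \mapsto f(g\,\cdot\,)$ is an algebra automorphism of $H^*$, and it permutes the simple summands according to the left $G(H)$-action on $\Irr(H^*)$. This produces an orthogonal decomposition $(H^*)^K \simeq \bigoplus_{O} \End(V_{\eta_0})^{G[\eta_0]}$ over $G(H)$-orbits $O$ (with $\eta_0 \in O$ a chosen representative), so commutativity reduces to each summand being commutative. The orbit of the group-likes contributes only $k$, and only orbits through characters $\eta$ of degree $p$ require serious analysis.

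For such an $\eta$, the isomorphism $k_g \otimes V_\eta \simeq V_\eta$ implicit in $g\eta = \eta$ gives an intertwiner $T_g \in \End(V_\eta)$ (unique up to a scalar), and a direct calculation identifies the action of $L_g^*$ on $\End(V_\eta) \simeq M_p(k)$ with conjugation by $T_g$. Hence $\End(V_\eta)^{G[\eta]}$ is the centralizer in $M_p(k)$ of the subspace $A := \mathrm{span}\{T_g : g \in G[\eta]\}$. The key input is the identity
\[
\eta\eta^* = \sum_{g \in G[\eta]} g + \sum_{\deg \chi = p} m(\chi, \eta\eta^*)\,\chi
\]
recalled in Subsection~\ref{irr-fusion}: the coefficient $1$ in front of each $g \in G[\eta]$ says that the one-dimensional subrepresentation $k_g$ appears in $V_\eta \otimes V_\eta^* = \End(V_\eta)$ with multiplicity exactly one, and since $kT_g$ lies in the $k_g$-isotypic line, the $T_g$'s are linearly independent and $\dim A = |G[\eta]|$. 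Taking degrees in the same identity and invoking the Nichols-Zoeller bound $|G[\eta]| \mid p^2$ leaves only $|G[\eta]| \in \{p, p^2\}$.

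I finish with a dimension count inside $M_p(k)$: if $|G[\eta]| = p$, then $G[\eta] \simeq \mathbb{Z}_p$ is abelian, so $A$ is a commutative subalgebra of $M_p(k)$ of dimension $p$, hence maximal commutative and equal to its own centralizer; if $|G[\eta]| = p^2$, then $\dim A = p^2 = \dim M_p(k)$ forces $A = M_p(k)$, whose centralizer is $Z(M_p(k)) = k$. In both cases $\End(V_\eta)^{G[\eta]}$ is commutative, so $(H^*)^K$ is commutative and $\bar H$ is cocommutative. I expect the main obstacle to be the linear-independence step for the $T_g$'s: this is precisely where the multiplicity-one structure encoded in the formula for $\eta\eta^*$ does the essential work, and once it is in hand the remainder is a short matrix-algebra computation.
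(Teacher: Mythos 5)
Your argument is correct, and it arrives at the same structural dichotomy that drives the paper's proof: for each $\eta\in\Irr(H^*)$ of degree $p$ one has $p\mid |G[\eta]|$ (from the decomposition of $\eta\eta^*$) and $|G[\eta]|\mid p^2$ (Nichols--Zoeller), so $|G[\eta]|\in\{p,p^2\}$, and the two cases are treated separately. The execution, however, is genuinely different. The paper stays on the coalgebra side, works one simple subcoalgebra $C\ni\eta$ at a time, and disposes of both the local statement (that $C/(kG[\eta])^+C$ is cocommutative when $|G[\eta]|=p$, and one-dimensional when $|G[\eta]|=p^2$) and the globalization to $H/(kG(H))^+H$ by citing \cite[Remark 3.2.7]{ssld} and \cite[Corollary 3.3.2]{ssld}. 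You instead dualize once and for all, identify $\bigl(H/(kG(H))^+H\bigr)^*$ with the fixed subalgebra of $H^*$ under $f\mapsto f(g\,\cdot)$, and compute that subalgebra block by block via the orbit decomposition and a Skolem--Noether/centralizer computation in $M_p(k)$; in effect you give a self-contained proof of exactly what the paper imports from \cite{ssld}, at the cost of some length. Two small points to tighten, neither of which affects validity: first, in the case $|G[\eta]|=p$ the span $A$ of the $T_g$'s is a priori a \emph{twisted} group algebra $k_\sigma G[\eta]$, and its commutativity comes from $G[\eta]$ being \emph{cyclic} (so that $H^2(G[\eta],k^\times)=0$ and the cocycle can be removed), not merely abelian; second, ``commutative of dimension $p$ inside $M_p(k)$, hence maximal commutative'' is false for general subalgebras --- you need that $A\simeq k^p$ is semisimple (true here since $\operatorname{char}k=0$), so that it is spanned by $p$ orthogonal rank-one idempotents and its centralizer is visibly $A$ itself.
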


\begin{proof}  Let $\chi$ be an irreducible character of degree $p$. We have that $$\chi\chi^* = \underset{g\in G[\chi]}{\sum} g + \underset{\deg \lambda = p}{\sum} \lambda.$$  So $p||G[\chi]|$. Therefore $|G[\chi]|$ is either $p = \deg \chi$ or $p^2$, because it divides $(\deg \chi)^2$.

Moreover,  since $\chi = g \chi$ for all $g\in G[\chi]$, we have $G[\chi]C = C$, where $C$ is the simple subcoalgebra of $H$ containing $\chi$. Then it follows from \cite[Remark 3.2.7]{ssld} that $C/(kG[\chi])^+C$ is a cocommutative coalgebra (indeed, $|G[\chi]|$ is either $p = \deg \chi$ or $p^2$, but in the last case, $C/(kG[\chi])^+C$ is one-dimensional, hence also cocommutative). Then $H/(kG(H))^+H$ is a cocommutative coalgebra, by \cite[Corollary 3.3.2]{ssld}. \end{proof}

\subsection{Results for the type $(1, p; p, n)$} Let $p$ be a prime number.
Along this subsection $H$ will be a semisimple Hopf algebra such that $\cd(H) = \{ 1, p \}$ and $|G(H^*)| = p$.
So that $H$ is of type $(1, p; p, n)$ as an algebra.

\begin{proposition}\label{dual-nilp} Suppose that $p$ divides $|G(H)|$.
Then $G(H^*) \subseteq Z(H^*)$ and  $H^*$ is nilpotent. \end{proposition}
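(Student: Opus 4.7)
My plan is to produce an abelian central extension of $H^*$ of the form
$$
k \to k^{\mathbb Z_p} \to H^* \to kF \to k,
$$
and to conclude both parts of the proposition by invoking Proposition~\ref{Fnilp}(i) (which gives centrality, hence $G(H^*) \subseteq Z(H^*)$) and Corollary~\ref{Hnilp} (which gives nilpotency of $H^*$).

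The preliminary reductions are as follows. Applying Lemma~\ref{cocommutative} with $H$ replaced by $H^*$---which is legitimate because $\cd((H^*)^*) = \cd(H) = \{1,p\}$---one gets that $\overline{H^*} := H^*/(kG(H^*))^+H^*$ is a cocommutative coalgebra. Moreover, the argument in the proof of that lemma shows that, for every irreducible character $\chi$ of $H$ of degree $p$, the stabilizer $G[\chi] \subseteq G(H^*)$ has order $p$ or $p^2$; since $|G(H^*)| = p$, we conclude $G[\chi] = G(H^*)$, and the symmetric analysis of $\chi^*\chi$ yields that the right stabilizer also equals $G(H^*)$. Consequently every simple subcoalgebra $C \subseteq H^*$ of dimension $p^2$ is stable under both left and right multiplication by $kG(H^*)$, i.e.\ $g C = C = C g$ for all $g \in G(H^*)$.

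The crucial remaining step is to show that $kG(H^*)$ is a \emph{normal} Hopf subalgebra of $H^*$. Once this is established, $\overline{H^*}$ inherits a Hopf algebra structure from $H^*$ and, being cocommutative over $k$, is necessarily the group algebra $kF$ of some finite group $F$; the resulting abelian exact sequence $k \to kG(H^*) \to H^* \to kF \to k$ is of the form \eqref{abext-1p} (observe that $G(H^*)$ is cyclic of order $p$, so $kG(H^*) \cong k\mathbb Z_p \cong k^{\mathbb Z_p}$). Proposition~\ref{Fnilp}(i), applied to this sequence with $|G(H^*)| = p$, then forces the extension to be central, so $G(H^*) \subseteq Z(H^*)$; and Corollary~\ref{Hnilp} delivers the nilpotency of $H^*$.

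The main obstacle is therefore proving the normality of $kG(H^*)$ in $H^*$, and this is where the hypothesis $p \mid |G(H)|$ must be used. I would proceed as follows: an order-$p$ grouplike element $g_0 \in G(H)$ provides a Hopf algebra inclusion $k\langle g_0\rangle \hookrightarrow H$ and, by dualization, a Hopf algebra surjection $H^* \twoheadrightarrow k^{\langle g_0\rangle} \cong k^{\mathbb Z_p}$. Combining this with the bimodule stability $g C = C g$ established above, the goal is to verify that the commutator $g h - h g$ lies in $(kG(H^*))^+ H^*$ for every $g \in G(H^*)$ and every $h \in H^*$, a condition equivalent to the normality of $kG(H^*)$. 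I expect the technical difficulty to be precisely reconciling the two $\mathbb Z_p$-structures---one coming from $G(H^*) \subseteq H^*$, the other from the grouplike $g_0 \in G(H)$---for which results on Hopf algebra extensions or the Kac exact sequence in the spirit of \cite{gel-naidu} may be needed.
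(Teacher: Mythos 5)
Your overall strategy is the right one -- reduce to an abelian central exact sequence $k \to k^{\mathbb Z_p} \to H^* \to kF \to k$ and then quote Proposition~\ref{Fnilp}(i) and Corollary~\ref{Hnilp} -- and your preliminary observations (cocommutativity of $H^*/(kG(H^*))^+H^*$ via Lemma~\ref{cocommutative}, the computation of the left and right stabilizers, the stability $gC=Cg=C$ of the $p^2$-dimensional simple subcoalgebras) are all correct. But the proof has a genuine gap exactly where you say it does: you never establish that $kG(H^*)$ is a normal Hopf subalgebra of $H^*$. This is not a technical afterthought; it is the entire content of the proposition (centrality, hence normality, is what is being asserted), and the stability $gC=Cg$ of simple subcoalgebras does not by itself yield $H^*(kG(H^*))^+=(kG(H^*))^+H^*$. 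Your proposed route -- a direct verification of the commutator condition, with the Kac exact sequence as a possible tool -- cannot work as stated: the Kac exact sequence classifies extensions \emph{after} one knows that an abelian exact sequence exists, so it cannot be used to produce the normality you need.

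The paper closes this gap by a quite different mechanism. The inclusion $k\mathbb Z_p\simeq kG\hookrightarrow H$ dualizes to a surjection $\pi: H^*\to kG$, and by \cite[Lemma 4.1.9]{ssld} the composite $kG(H^*)\hookrightarrow H^*\xrightarrow{\pi} kG$ is an isomorphism, so that $H^*\simeq R\,\#\,kG(H^*)$ is a Radford biproduct, with $R=(H^*)^{\co\pi}$ a Yetter--Drinfeld Hopf algebra over $\mathbb Z_p$. Since $R\simeq H^*/H^*(kG(H^*))^+$ is cocommutative as a coalgebra (Lemma~\ref{cocommutative}) and $\dim R=1+np$ is prime to $p$, Sommerh\"auser's theorem \cite[Proposition 7.2]{So} forces $R$ to be a trivial (i.e.\ ordinary) Hopf algebra, and then \cite[Proposition 4.6.1]{ssld} gives the central abelian exact sequence directly -- normality and centrality of $kG(H^*)$ come out of the biproduct argument rather than being checked by hand. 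To repair your write-up you would need to import this (or an equivalent) argument; without it, the claim that $\overline{H^*}$ ``inherits a Hopf algebra structure'' is unjustified and the rest of the proof does not get off the ground.
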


\begin{proof} By assumption, there is a subgroup $G$ of $G(H)$
with $|G| = p$ (i.e. $G\simeq \mathbb{Z}_p$) and the Hopf algebra
inclusion  $kG \rightarrow H$ induces the following sequence:
$$kG(H^*)\xrightarrow{\emph{i}} H^*\xrightarrow{\pi} kG,$$ with $\pi$
surjective. By \cite[Lemma 4.1.9]{ssld}, setting $A = kG(H^*)$ and
$B = kG$, we have that $\pi\circ \emph{i}:kG(H^*)\rightarrow kG$ is
an isomorphism and $H^*\simeq R \# kG(H^*)\simeq R \# \mathbb{Z}_p$ is
a biproduct, where $R\doteq (H^*)^{\co\pi}$ is a semisimple braided
Hopf algebra over $\mathbb{Z}_p$. The coalgebra $R$ is
cocommutative, by Lemma \eqref{cocommutative}, because $R\simeq
H^*/H^*kG(H^*)^+$ as coalgebras.  Since $p \nmid 1 + np = \dim R$ then
by \cite[Proposition 7.2]{So}, $R$ is trivial. Therefore, by
\cite[Proposition 4.6.1]{ssld}, $H^*$ fits into an abelian
\emph{central} exact sequence $$k\rightarrow k\mathbb{Z}_p \rightarrow
H^*\rightarrow R\rightarrow k.$$

Now, since the extension is abelian, there is a group $F$
such that $R\simeq kF$. It follows from Corollary \ref{Hnilp} that $H^*$ is nilpotent.
\end{proof}

\begin{proposition}\label{qt-1p} Suppose $H$ is quasitriangular. Then $G(H^*) \subseteq Z(H^*)$ and $H^*$ is nilpotent.
\end{proposition}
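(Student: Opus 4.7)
The plan is to exploit the quasitriangular structure of $H$ through the Hopf algebra map $f_{R_{21}}:H^* \to H^{\op}$ from Subsection \ref{quasitriangular}. Since $f_{R_{21}}$ is a Hopf algebra map it sends group-likes to group-likes, so its restriction is a group homomorphism $G(H^*) \to G(H)$; because $|G(H^*)| = p$ is prime, this restriction is either injective or identically trivial, and I split the argument accordingly.

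If $f_{R_{21}}|_{G(H^*)}$ is injective, its image is a subgroup of order $p$ in $G(H)$, so $p$ divides $|G(H)|$ and Proposition \ref{dual-nilp} directly yields both $G(H^*) \subseteq Z(H^*)$ and the nilpotency of $H^*$.

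Assume instead that $f_{R_{21}}|_{G(H^*)}$ is trivial. I first establish $G(H^*) \subseteq Z(H^*)$ via the Drinfeld double. The canonical surjection $D(H) \twoheadrightarrow H$, $p \# h \mapsto f_R(p)h$, dualizes to an injection $H^* \hookrightarrow D(H)^*$ and thus to an embedding $G(H^*) \hookrightarrow G(D(H)^*)$. A short calculation based on the identity $\phi(f_R(p)) = p(f_{R_{21}}(\phi))$, valid for $\phi \in G(H^*)$ and $p \in H^*$, identifies this embedding as $\phi \mapsto f_{R_{21}}(\phi) \# \phi$ in the $g \# f$ notation of Subsection \ref{quasitriangular}; in the present case this image reduces to $1 \# \phi$. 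Since $D(H)$ is factorizable, Theorem \ref{central-gplk} together with the remark immediately following it sends $1 \# \phi$ to the central element $\phi \# 1 \in G(D(H)) \cap Z(D(H))$, and the last observation of that remark (the case $g = 1$) then forces $\phi \in G(H^*) \cap Z(H^*)$.

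With centrality in place, $kG(H^*)$ is a central Hopf subalgebra of $H^*$, so Lemma \ref{cocommutative} produces a cocommutative Hopf algebra quotient $R = H^*/(kG(H^*))^+H^*$, which must be isomorphic to $kF$ for some finite group $F$. Using the Hopf algebra isomorphism $kG(H^*) \simeq k^{\mathbb Z_p}$, the resulting sequence is the abelian exact sequence $k \to k^{\mathbb Z_p} \to H^* \to kF \to k$, and Corollary \ref{Hnilp}, applied with $H^*$ in place of the middle term, gives the nilpotency of $H^*$. The principal technical step is the explicit description of the embedding $G(H^*) \hookrightarrow G(D(H)^*)$ in terms of $f_{R_{21}}$; once that is in hand, the case split combines cleanly with Proposition \ref{dual-nilp} and Corollary \ref{Hnilp}.
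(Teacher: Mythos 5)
Your proposal is correct and follows essentially the same route as the paper: both pass to the embedding $G(H^*)\hookrightarrow G(D(H)^*)$, apply Theorem \ref{central-gplk} and the remark following it, and split into the case $p\mid |G(H)|$ (handled by Proposition \ref{dual-nilp}) versus the case where the $G(H)$-component is trivial, which forces $G(H^*)\subseteq Z(H^*)$ and lets Lemma \ref{cocommutative} and Corollary \ref{Hnilp} finish. Your only addition is to make explicit the formula $\phi\mapsto f_{R_{21}}(\phi)\#\phi$ for the embedding, which the paper leaves implicit; this is a correct and harmless refinement.
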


\begin{proof} Consider the Drinfeld double $D(H)$. Since $H$ is quasitriangular, $G(H^*) \simeq \mathbb Z_p$ is isomorphic to a subgroup of $G(D(H)^*)$.
Then $G(D(H)^*)$ has an element $g\# f$ of order $p$. We have that $G(D(H)^*) \simeq G(D(H)) \cap Z(D(H)) \subseteq G(D(H)) = G(H^*) \times G(H)$; see Subsection \ref{quasitriangular}.

In particular, the element $f \# g \in G(D(H)) \cap Z(D(H))$ is of order $p$.
If $g$ is of order $p$, then the proposition follows from Proposition \ref{dual-nilp}.
Thus we may assume that $g = 1$. Then $f \in G(H^*) \cap Z(H^*)$ is of order $p$, implying that $G(H^*) \subseteq Z(H^*)$.

\medbreak Therefore $H^*$ fits into an abelian central exact
sequence $k \to k^{\mathbb Z_p} \to H^* \to kF \to k$, where $F$
is a finite group such that $kF \simeq H^*/H^*(k^{\mathbb
Z_p})^+$, by Lemma \ref{cocommutative}. In view of the assumption
on the algebra structure of $H$, Corollary \ref{Hnilp} implies
that $H^*$ is nilpotent, as claimed. \end{proof}

\subsection{Results for the type $(1, p; p, 1)$}  We next discuss the case where $H$ is of type $(1, p; p, 1)$ as an algebra (not necessarily quasitriangular). In particular, $\dim H = p(p+1)$ is even.

Notice that under this assumption, the category $\Rep H$ is a \emph{near-group category} with fusion rule given by the group $G = G(H^*)\simeq \mathbb{Z}_p$ and the integer $\kappa$ \cite{Siehler}.

Let $\chi$ be the irreducible character of degree $p$. It follows that $\chi = \chi^*$ and $\chi g = \chi = g\chi$. Then $$\chi^2 = \underset{g\in G(H^*)}{\sum} g + \kappa\chi.$$
Taking degrees in the equation above we obtain $p^2 = p+\kappa p$,
which means that  $\kappa = p-1$.

We shall use the following proposition. A more general statement will be proved in Theorem \ref{solv-neargp}.

\begin{proposition}\label{caso1sol} Suppose $H$ is of type $(1, p; p, 1)$ as an algebra. Then one of the following holds:

\emph{(i)} $p = 2$ and $H \simeq k\mathbb S_3$, or

\emph{(ii)} $p = 2^{\alpha}-1$\footnote{Such a prime number is called a
\emph{Mersenne prime}, in particular $\alpha$ must be prime.}, and $\dim H = 2^{\alpha}p$.

In particular, $H$ is solvable. \end{proposition}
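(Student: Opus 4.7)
The starting point is the fusion relation $\chi^2=\sum_{g\in\mathbb Z_p}g+(p-1)\chi$ already derived in the paper, which exhibits $\Rep H$ as a near-group category with group $G(H^*)\simeq\mathbb Z_p$ and multiplicity $\kappa=p-1=|G(H^*)|-1$. The plan is to invoke Siehler's classification \cite{Siehler} of near-group categories in the regime $\kappa=n-1$ to realise $\Rep H$ as a group-theoretical category $\mathcal C(G,\omega,\mathbb Z_p,\alpha)$ for some finite group $G$ containing $\mathbb Z_p$. Once this is established, the identity $|G(\Rep H)|=|G(H^*)|=p$ places us in the scope of Lemma~\ref{Fnilp-gt}, so $G$ is a Frobenius group with complement $\mathbb Z_p$; by Remark~\ref{rmk-gt}, $G=N\rtimes\mathbb Z_p$ with $N=\Fit(G)$ nilpotent, and a Frobenius--Perron dimension count gives $|G|=\dim H=p(p+1)$, hence $|N|=p+1$.

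The next step pins down $N$ from the Frobenius action. Since $\mathbb Z_p$ acts without fixed points on $N\setminus\{1\}$, every nontrivial $\mathbb Z_p$-invariant subgroup has order $\equiv 1\pmod p$. Applied to a nontrivial Sylow subgroup $N_q$ this gives $|N_q|\ge p+1=|N|$, so $N=N_q$ is a single $q$-group. Applied to the nontrivial center $Z(N)$ it forces $Z(N)=N$, so $N$ is abelian; applied to the proper subgroup $N^q\subsetneq N$ (which is proper whenever $N$ has any factor $\mathbb Z_{q^e}$ with $e\ge 2$) it forces $N^q=1$, so $N\cong\mathbb F_q^\alpha$ is elementary abelian with $q^\alpha=p+1$. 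When $p=2$ we obtain $q=3$, $N=\mathbb Z_3$, $G\cong\mathbb S_3$, and in dimension $6$ the well-known classification of semisimple Hopf algebras leaves only $H\cong k\mathbb S_3$ with the required algebra type $(1,2;2,1)$. When $p$ is odd, $p+1$ is even, forcing $q=2$ and $p+1=2^\alpha$: thus $p=2^\alpha-1$ is a Mersenne prime and $\dim H=2^\alpha p$. Solvability is then automatic: $G$ is an extension of $\mathbb Z_p$ by an abelian group and so is solvable, whence the group-theoretical fusion category $\mathcal C(G,\omega,\mathbb Z_p,\alpha)\simeq\Rep H$ is solvable.

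I expect the main obstacle to be the initial reduction to a group-theoretical model of the form $\mathcal C(G,\omega,\mathbb Z_p,\alpha)$. Siehler's classification does this cleanly, but a self-contained alternative would first apply Lemma~\ref{cocommutative} to $H^*$ to realise $H^*/H^*(kG(H^*))^+$ as a cocommutative coalgebra of dimension $p+1$, then use the stabilizer identity $G[\chi]=G(H^*)$ and an analysis of the $G(H^*)$-action on the simple subcoalgebra containing $\chi$ to promote this to an abelian exact sequence, and finally invoke Remark~\ref{H-group-theoretical} to land in the group-theoretical setting before applying Lemma~\ref{Fnilp-gt}.
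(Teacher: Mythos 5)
Your second paragraph (the analysis of the Frobenius kernel $N$: fixed-point-free action forces every nontrivial invariant subgroup to have order $\equiv 1 \pmod p$, hence $N$ is elementary abelian of order $q^\alpha=p+1$, giving the dichotomy $p=2$ or $p=2^\alpha-1$) is correct and is a nice way to recover the Mersenne condition. But the whole argument rests on the first step, and that step is a genuine gap. Siehler's Theorem 1.2 is a classification of the admissible \emph{fusion rules} in the regime $\kappa=|G|-1$: it tells you that $G(H^*)$ must be cyclic of order $q^\alpha-1$. It does not realise an arbitrary categorification as a group-theoretical category $\C(G,\omega,\mathbb Z_p,\alpha)$, so Lemma \ref{Fnilp-gt} cannot be invoked. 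Your proposed self-contained repair has the same problem: Lemma \ref{cocommutative} gives cocommutativity of the quotient coalgebra $H^*/H^*(kG(H^*))^+$, but to promote this to an abelian exact sequence you need $kG(H^*)$ to be a \emph{normal} Hopf subalgebra of $H^*$. That normality is exactly the content of Proposition \ref{type1pp1}, which the paper proves \emph{after} and \emph{by means of} the present proposition (solvability of $H$ gives solvability of $D(H)$, hence $G(D(H)^*)\neq 1$ by \cite[Proposition 4.5 (iv)]{ENO2}, and only then does the extension appear). As it stands your route is circular.

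For comparison, the paper's proof avoids all of this: it uses only the fusion-rule part of \cite[Theorem 1.2]{Siehler} to conclude $p=q^\alpha-1$ for some prime power $q^\alpha$. If $q>2$ then $p$ is even, so $p=2$, $\dim H=6$ and $H\simeq k\mathbb S_3$; if $q=2$ then $p=2^\alpha-1$ and $\dim H=p(p+1)=2^\alpha p$. In either case $\FPdim(\Rep H)$ has at most two prime divisors, and solvability follows at once from the Burnside-type theorem for fusion categories \cite[Theorem 1.6]{ENO2}. No group-theoretical realisation, Frobenius-group structure, or extension theory is needed. If you want to salvage your approach, you would have to first prove the abelian exact sequence for $H^*$ independently of solvability, which is substantially harder than the statement being proved.
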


\begin{proof}   By \cite[Theorem 1.2]{Siehler},
it follows that $G(H^*)\simeq \mathbb{Z}_{q^\alpha -1}$, for some
prime $q$ and $\alpha \geq 1$. Therefore $p = q^\alpha -1$. If $q
> 2$, then  $p = 2$, which implies $H\simeq k\mathbb S_3$ is
cocommutative. If $q = 2$, then $p$ has the particular expression
$p = 2^{\alpha} - 1$.

Hence  $\dim H$ equals $6$ or $p(p+1) = 2^{\alpha}p$. By Burnside's theorem for fusion categories \cite[Theorem 1.6]{ENO2}, $H$ is solvable. \end{proof}

\begin{remark}\label{rmk-seitz} Let $p$ be a prime number such that $p = 2^{\alpha}-1$, as in Proposition \ref{caso1sol}. Consider the affine group $N$ of the field $\mathbb F_{2^\alpha}$, that is, $N$ is the semidirect product $\mathbb F_{2^\alpha} \rtimes \mathbb F_{2^\alpha}^{\times}$ with respect to the natural action of $\mathbb F_{2^\alpha}^{\times}$ on $\mathbb F_{2^\alpha}$. Then the group $N$ has the prescribed algebra type (see \cite[Subsection 4.1]{Siehler}).

Furthermore, suppose $p$ is (any) prime number, and $N$ is a group whose group algebra has algebra type $(1, p; p, 1)$. Then $N$ has order $p(p+1)$ and it follows from the main result of \cite{seitz} that either $p = 2$ and $N \simeq \mathbb S_3$ or $p = 2^{\alpha}-1$, $\alpha > 1$, and $N \simeq \mathbb F_{2^\alpha} \rtimes \mathbb F_{2^\alpha}^{\times}$.
\end{remark}


\begin{proposition}\label{type1pp1}
Let $H$ be a semisimple Hopf algebra of type $(1, p; p, 1)$ as an algebra. Then $G(H^*) \subseteq Z(H^*)$ and $H^*$ is nilpotent.
\end{proposition}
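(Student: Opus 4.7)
The plan is to reduce to Proposition \ref{dual-nilp}, which yields both assertions whenever $p\mid|G(H)|$; since $|G(H^{*})|=p$ is built into the type $(1,p;p,1)$ hypothesis, everything comes down to establishing that $p$ divides $|G(H)|$.

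By Proposition \ref{caso1sol} the situation splits into two cases. If $p=2$, then $H\simeq k\mathbb{S}_3$, so $H^{*}\simeq k^{\mathbb{S}_3}$ is commutative, giving $G(H^{*})\subseteq Z(H^{*})$ at once and nilpotency as a dual group algebra (cf.\ Section \ref{three}); equivalently, $|G(H)|=6$ is divisible by $2$ and Proposition \ref{dual-nilp} applies. If instead $p=2^{\alpha}-1$ is Mersenne and $\dim H = 2^{\alpha}p$, I decompose $H$ as a coalgebra, $H = kG(H) \oplus \bigoplus_{i} C_i$, to obtain the identity $|G(H)| + \sum_{i}\dim C_{i} = 2^{\alpha}p$. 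Nichols-Zoeller forces $|G(H)|$ to divide $2^{\alpha}p$, so either $p\mid|G(H)|$ (and we are done) or $|G(H)|$ is a divisor of $2^{\alpha}$ coprime to $p$, a possibility I must rule out.

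To exclude the coprime case, the plan is to exploit the solvability of $H$ already established in Proposition \ref{caso1sol} together with the near-group classification of Siehler that underlies it: the stringent algebra type $(1,p;p,1)$ combined with the Mersenne constraint $p=2^{\alpha}-1$ leaves little room for $H$ beyond being (a twist of) the group algebra of the affine group $\mathbb{F}_{2^{\alpha}} \rtimes \mathbb{F}_{2^{\alpha}}^{\times}$, in which case $|G(H)|$ is a multiple of $p$. Once $p\mid|G(H)|$ is in hand, Proposition \ref{dual-nilp} delivers $G(H^{*})\subseteq Z(H^{*})$ and the nilpotency of $H^{*}$ simultaneously.

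The main obstacle is precisely this exclusion step in the Mersenne case: the algebra type alone leaves $|G(H)|$ a priori free among divisors of $p(p+1)=2^{\alpha}p$, and one must genuinely use the Mersenne structure of $p$, together with the near-group constraints on $\Rep H$, to force the $p$-divisibility.
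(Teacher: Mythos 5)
You have correctly identified the weak point yourself, and it is fatal as the argument stands: the reduction to Proposition \ref{dual-nilp} requires $p\mid|G(H)|$, and in the Mersenne case nothing you write establishes this. The repair you sketch does not work for two reasons. First, the assertion that $H$ must be a twist of the group algebra of $\mathbb{F}_{2^{\alpha}}\rtimes\mathbb{F}_{2^{\alpha}}^{\times}$ is essentially the theorem that \emph{follows} Proposition \ref{type1pp1} in the paper, and its proof there uses Proposition \ref{type1pp1} as an input; invoking it here is circular. Second, even granting $H\simeq(kN)^{J}$, a twist deforms only the comultiplication: it preserves the algebra structure of $H$ (hence the type $(1,p;p,1)$ and the value $|G(H^{*})|=p$) but not the coalgebra structure, so $G(H)$ need not be related to $N$ at all, and $p\mid|N|$ gives no control on $|G(H)|$. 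Nichols--Zoeller only yields $|G(H)|\mid 2^{\alpha}p$, and neither Siehler's classification of the fusion rules nor the solvability of $H$ rules out $|G(H)|$ being a power of $2$. Indeed, the paper never proves $p\mid|G(H)|$ in general; its proof is structured precisely so as not to need it.

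The actual route is via the Drinfeld double. Solvability of $H$ (Proposition \ref{caso1sol}) gives solvability of $D(H)$, hence by \cite[Proposition 4.5 (iv)]{ENO2} a nontrivial element $f\#g\in G(D(H))\cap Z(D(H))\subseteq G(H^{*})\times G(H)$. A case analysis then shows that \emph{either} $p$ divides $|G(H)|$ --- the cases $|f|=|g|=p$ and $f^{n}=1$ for $n=|g|\neq p$, where your reduction to Proposition \ref{dual-nilp} does apply --- \emph{or} one produces a nontrivial central group-like directly: if $f\neq 1$ and $f^{n}\neq 1$ then $f^{n}\#1=(f\#g)^{n}\in Z(D(H))$ forces $G(H^{*})\subseteq Z(H^{*})$ since $|G(H^{*})|=p$, and if $f=1$ one gets a cocentral extension whose kernel must be $kG(H^{*})$ by the algebra type. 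In the branches where centrality is obtained directly, nilpotency of $H^{*}$ then follows from the resulting central abelian extension via Lemma \ref{cocommutative}, Proposition \ref{Fnilp} and Corollary \ref{Hnilp}. So the dichotomy to aim for is not ``$p$ always divides $|G(H)|$'' but ``either $p\mid|G(H)|$, or centrality of $G(H^{*})$ is visible in $Z(D(H))$''.
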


\begin{proof}
We have just proved in Proposition \ref{caso1sol} that under this hypothesis $H$ is solvable. Since $\Rep D(H)\simeq Z(\Rep H)$, then $D(H)$ is also solvable \cite[Proposition 4.5 (i)]{ENO2}.

By \cite[Proposition 4.5 (iv)]{ENO2}, $D(H)$ has nontrivial representations of dimension $1$, that is, $|G(D(H)^*)|\neq 1$. We have that $G(D(H)^*) \simeq G(D(H)) \cap Z(D(H)) \subseteq G(D(H)) = G(H^*) \times G(H)$; see Subsection \ref{quasitriangular}.

We next argue as in the proof of Proposition \ref{qt-1p}. Consider
an element $1\neq  f \# g \in G(D(H)) \cap Z(D(H))$. If $f = 1$,
then $1 \neq g \in Z(H) \cap G(H)$. Therefore, $H^*$ fits into a
cocentral extension $k \to K \to H^* \to k^{\langle g \rangle} \to
k$, where $K$ is a \emph{proper} normal Hopf subalgebra. The
assumption on the algebra structure of $H$ implies that $K =
kG(H^*)$. Thus $kG(H^*)$ is normal in $H^*$, and the extension is
abelian, by Lemma \ref{cocommutative}. The proposition follows in
this case from Proposition \ref{Fnilp} (i) and Corollary \ref{Hnilp}.

Thus we may assume that $f \neq 1$. In particular, $f$ has order $p$.

\medbreak If $|f| = |g| = p = |G(H^*)|$, we have that $p| |G(H)|$. Then $G(H^*)\subseteq Z(H^*)$ and $H^*$ is nilpotent, by Proposition \ref{dual-nilp}.

Otherwise, take $|g| = n$, with $p\neq n$. If $f^n = 1$, then $p$ divides $n$ and thus $p$ divides $|G(H)|$. As before, we are done by Proposition \ref{dual-nilp}.

If $f^n \neq 1$, then $f^n\# 1 = (f^n\# g^n) = (f\# g)^n\in Z(D(H))$, which implies that $f^n\neq 1$ is central in $H^*$ and thus $G(H^*)\subseteq Z(H^*)$.

\medbreak Therefore $H^*$ fits into an abelian central exact sequence $k \to k^{\mathbb Z_p} \to H^* \to kF \to k$, where $F$ is a finite group such that $kF \simeq H^*/H^*(k^{\mathbb Z_p})^+$, by Lemma \ref{cocommutative}.
In view of the assumption on the algebra structure of $H$, Corollary \ref{Hnilp} implies that $H^*$ is nilpotent, as claimed.
\end{proof}

\begin{theorem}\label{twist-aff}
Let $H$ be a semisimple Hopf algebra of type $(1, p, p, 1)$ as an algebra.
Then either $p = 2$ and $H \simeq k\mathbb S_3$, or $H$ is isomorphic to a twisting of the group algebra $kN$, where $p = 2^{\alpha} -1$, $\alpha > 1$, and $N$ is the affine group of the field $\mathbb F_{2^\alpha}$.
\end{theorem}

\begin{proof} If $p  = 2$, then $\dim H = 6$ and the result follows from \cite{masuoka-6-8}.
So suppose that $p$ is odd.
By Propositions \ref{type1pp1} and \ref{caso1sol}, $H^*$ fits into an abelian central exact sequence $k \to k^{\mathbb Z_p} \to H^* \to kF \to k$, where $F$ is a finite group of order $p + 1 = 2^{\alpha}$.
Then the action $\vartriangleleft: \mathbb Z_p \times F \to \mathbb Z_p$ is trivial, while the  action $\vartriangleright: \mathbb Z_p \times F \to F$ is determined by an automorphism $\varphi\in \Aut F$ of order $p = 2^\alpha - 1$.

\medbreak We first claim that the group $F$ must be abelian. By a result of P. Hall \cite[5.3.3]{robinson}, since $F$ is a $2$-group, the order of $\Aut F$ divides the number $n 2^{(\alpha - r)r}$, where $n = |\GL (r, 2)|$ and $2^r$ equals the index in $F$ of the Frattini subgroup $\Frat (F)$  (which is defined as the intersection of all the maximal subgroups of $F$ \cite[pp. 135]{robinson}). In particular, we have $r \leq \alpha$.

Since the order of $\varphi$ divides the order of $\Aut F$ and $|\GL (r, 2)| = (2^r - 1)(2^r - 2)\ldots (2^r - 2^{r-1})$, it follows that the prime $p = 2^\alpha - 1$ divides $2^r - 1$, which means that $r = \alpha$ and, therefore, $\Frat (F) = 1$.

Since $F$ is nilpotent (because it is a $2$-group), a result of Wielandt \cite[5.2.16]{robinson} implies that $[F, F]$, the commutator subgroup of $F$, is a subgroup of the Frattini subgroup $\Frat (F)$. As we have just shown, we have $\Frat (F) = 1$ in this case. Thus $[F, F] = 1$ and therefore $F$ is abelian, as claimed.

\medbreak Consider the split extension $B_0 = k^{\mathbb Z_p} \# kF$ associated to the matched pair $(\mathbb Z_p, F)$. Since $F$ is abelian, $B_0$ (being a central extension) is commutative. This means that $B_0$ isomorphic to $k^N$, where $N = F \rtimes \mathbb Z_p$.

Notice that $|F| = 2^\alpha$ is relatively prime to $p$. It follows from \cite[Proposition 5.22]{exp-dpr} and \cite[Proposition 3.1]{ma-ext-cohom} that $H^*$ is obtained from the split extension $B_0 = k^{\mathbb Z_p} \# kF \simeq k^N$ by twisting the multiplication. Indeed, the element representing the class of $H^*$ in the group $\Opext(kF, k^{\mathbb Z_p})$ is the image of an element of $H^2(F, k^{\times})$ under the map $H^2(F, k^{\times}) \oplus H^2(\mathbb Z_p, k^{\times}) \simeq H^2(F, k^{\times}) \to \Opext(kF, k^{\mathbb Z_p})$ in the Kac exact sequence \cite[Theorem 1.10]{ma-ext-cohom}. Then the claim follows from \cite[Proposition 3.1]{ma-ext-cohom}.  Dualizing, we get that $H$ is a twisting of the group algebra of the gr oup $N$.

Finally, the assumption on the algebra structure of $H$ implies that $N$ is one of the claimed groups. See Remark \ref{rmk-seitz}.
\end{proof}


\begin{corollary}
Let $H$ be a semisimple Hopf algebra of type $(1, p, p, 1)$ as an algebra. Then $\Rep H \simeq \Rep N$, where $N = \mathbb S_3$ or  $N$ is the affine group of the field $\mathbb F_{2^\alpha}$, for some $\alpha > 1$.
\end{corollary}

\section{Solvability}\label{solvability}

Recall from \cite{ENO2} that a fusion category $\C$ is called
\emph{weakly group-theoretical} if it is Morita equivalent to a
nilpotent fusion category. If, furthermore, $\mathcal C$ is Morita
equivalent to a cyclically nilpotent fusion category, then $\C$ is
called \emph{solvable}.

In other words, $\mathcal C$ is weakly group-theoretical (solvable) if there exists an indecomposable algebra $A$ in $\C$ such that the category ${}_A\C_A$ of $A$-bimodules in $\C$ is a  (cyclically)  nilpotent fusion category.

\medbreak Note that a group-theoretical fusion category is weakly group-theoretical.

On the other hand, the condition on  $\C$ being solvable is equivalent to the existence of a sequence of fusion categories $$\C_0 = \vect_k,
\C_1, \dots , \C_n = \C,$$   such that $\C_i$ is obtained from
$\C_{i-1}$ either by a $G_i$-equivariantization or as a
$G_i$-extension, where $G_1, \dots , G_n$ are cyclic groups
of prime order. See \cite[Proposition 4.4]{ENO2}.

\medbreak If $G$ is  a finite group and $\omega \in H^3(G,
k^{\times})$, we have that the categories $\C(G, \omega)$ and
$\Rep G$ are solvable if and only if $G$ is solvable.

\medbreak Let us call a semisimple Hopf algebra $H$ \emph{weakly
group-theoretical} or \emph{sol\-vable}, if the category $\Rep H$
is weakly group-theoretical or solvable, respectively.

\subsection{Solvability of an abelian extension}
By \cite[Proposition 4.5 (i)]{ENO2},  solvability of a fusion category is preserved under Morita equivalence.
Therefore, a group-theoretical fusion category $\C(G, \omega, F, \alpha)$ is solvable if and only if the group $G$ is solvable.

\begin{remark} As a consequence of the Feit-Thompson theorem \cite{feit-thompson}, we
get that if the order of $G$ is odd, then $\C(G, \omega, F,
\alpha)$ is solvable. This fact generalizes
to weakly group-theoretical fusion categories; see Proposition \ref{odd-wgt} below.
\end{remark}

\medbreak This implies the following characterization of the solvability of an abelian extension:

\begin{corollary}\label{G-solv} Let $H$ is a semisimple Hopf algebra
fitting into an abelian exact sequence \eqref{abelian-sec},  then
$H$ is solvable if and only if $G = F\bowtie \Gamma$ is solvable. \end{corollary}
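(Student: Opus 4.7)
The plan is to reduce this corollary directly to the characterization of solvability for group-theoretical fusion categories that was just established in the paragraph immediately preceding the statement. By definition, the semisimple Hopf algebra $H$ is solvable precisely when $\Rep H$ is solvable, so the entire task is to identify $\Rep H$ with a group-theoretical fusion category whose underlying group is $G = F \bowtie \Gamma$.

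First I would invoke Remark \ref{H-group-theoretical}: since $H$ arises from the abelian extension \eqref{abelian-sec} associated to the matched pair $(F, \Gamma)$, there is an equivalence of fusion categories
\begin{equation*}
\Rep H \simeq \C(G, \omega, F, 1),
\end{equation*}
where $G = F \bowtie \Gamma$ and $\omega \in H^3(G, k^{\times})$ is the $3$-cocycle produced by the Kac exact sequence. In particular, $\Rep H$ is group-theoretical.

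Next I would appeal to the characterization of solvability for group-theoretical categories recalled just above the statement: the category $\C(G, \omega, F, \alpha)$ is Morita equivalent to the pointed category $\C(G, \omega)$, and since solvability is a Morita invariant by \cite[Proposition 4.5 (i)]{ENO2}, $\C(G, \omega, F, \alpha)$ is solvable if and only if $\C(G, \omega)$ is solvable, if and only if the group $G$ is solvable. Applying this with $\alpha = 1$ and the equivalence above yields the equivalence
\begin{equation*}
H \text{ is solvable} \iff \Rep H \text{ is solvable} \iff G \text{ is solvable},
\end{equation*}
which is exactly the statement of the corollary.

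There is no real obstacle here, since all the ingredients have been assembled in the earlier sections; the only point to be careful about is invoking the correct form of the equivalence $\Rep H \simeq \C(G, \omega, F, 1)$ from Remark \ref{H-group-theoretical}, so that the underlying group on the group-theoretical side is indeed the factorizable group $G = F \bowtie \Gamma$ coming from the matched pair, rather than some other subgroup or quotient.
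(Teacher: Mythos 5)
Your proposal is correct and follows exactly the paper's own route: the corollary is stated there as an immediate consequence of Remark \ref{H-group-theoretical} (giving $\Rep H \simeq \C(G,\omega,F,1)$ with $G = F\bowtie\Gamma$) together with the preceding observation that, by Morita invariance of solvability \cite[Proposition 4.5 (i)]{ENO2}, a group-theoretical category $\C(G,\omega,F,\alpha)$ is solvable if and only if $G$ is solvable. Nothing is missing.
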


In particular, if $H$ is solvable, then $F$ and $\Gamma$ are solvable.

\medbreak A result of Wielandt \cite{wielandt} implies that if the
groups $\Gamma$ and $F$ are nilpotent, then $G$ is solvable. As a
consequence, we get the following:

\begin{corollary}\label{ab-nilp} Suppose $\Gamma$ and $F$ are nilpotent. Then $H$ is solvable. \end{corollary}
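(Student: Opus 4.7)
The plan is to reduce the claim to Corollary \ref{G-solv} via the theorem of Wielandt that has just been invoked in the paragraph preceding the statement. Concretely, associated to the abelian exact sequence \eqref{abelian-sec} there is, by Subsection \ref{Abelian-extensions}, a matched pair $(F, \Gamma)$ and an exactly factorizable finite group $G = F \bowtie \Gamma$, so that $G$ admits the exact factorization $G = F\Gamma$ with $F \cap \Gamma = 1$. The hypothesis is precisely that both factors $F$ and $\Gamma$ are nilpotent subgroups of $G$.

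The first step is therefore to invoke Wielandt's theorem on products of nilpotent groups to conclude that $G$ itself is solvable. This is the only non-cosmetic input, and since it is already quoted as a black box in the paragraph preceding the statement, there is nothing to prove here beyond checking that the exact factorization $G = F\Gamma$ supplied by the matched pair is the hypothesis Wielandt needs. The second and final step is to apply Corollary \ref{G-solv}: a semisimple Hopf algebra $H$ fitting into an abelian exact sequence \eqref{abelian-sec} is solvable if and only if the associated factorizable group $G = F \bowtie \Gamma$ is solvable. Combining both steps yields that $H$ is solvable.

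There is essentially no obstacle: the statement is a direct concatenation of Wielandt's theorem with Corollary \ref{G-solv}. The only conceptual point worth making explicit in the written-up proof is that the group $G$ appearing in Wielandt's theorem is exactly the one controlled by Corollary \ref{G-solv}, namely the factorizable group built from the matched pair $(F,\Gamma)$; no further analysis of the cocycle data $(\sigma, \tau)$ defining $H = k^{\Gamma}\, {}^{\tau}\#_{\sigma}kF$ is required, since solvability of $H$ depends only on $G$ by Corollary \ref{G-solv}.
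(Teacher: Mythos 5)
Your proof is correct and coincides with the paper's own argument: the paper likewise observes that Wielandt's theorem gives solvability of the factorizable group $G = F\bowtie\Gamma$ when both factors are nilpotent, and then concludes via Corollary \ref{G-solv}. Nothing further is needed.
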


Then, for instance, the abelian extensions in Proposition
\ref{Fnilp} are solvable.

\medbreak Combining Corollary \ref{ab-nilp} with Lemma \ref{Fnilp-gt} and Remark \ref{rmk-gt}, we get:

\begin{corollary}\label{solv-gt} Let $\C = \C(G, \omega, \mathbb Z_p, \alpha)$. Assume that   $|G(\C)| = p$. Then  $\C$ is solvable.
 \end{corollary}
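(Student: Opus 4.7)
My plan is to deduce this directly by chaining together the three ingredients mentioned: Lemma \ref{Fnilp-gt}, Remark \ref{rmk-gt}, and the fact (recalled just before the corollary, from \cite[Proposition 4.5 (i)]{ENO2}) that a group-theoretical fusion category $\C(G, \omega, F, \alpha)$ is solvable if and only if the group $G$ is solvable. Everything needed is already in place, so the proof should be essentially a one-line assembly.

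First, I would apply Lemma \ref{Fnilp-gt} to the hypothesis $|G(\C)| = p$ to conclude that $G$ is a Frobenius group with Frobenius complement $\mathbb Z_p$. Then, invoking Remark \ref{rmk-gt} (which records the consequences of Frobenius' theorem together with Thompson's theorem on the nilpotency of the Frobenius kernel), I would obtain that $G$ is a semidirect product
\[
G \simeq N \rtimes \mathbb Z_p,
\]
where $N = \Fit(G)$ is a nilpotent normal subgroup of $G$.

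Next, since $N$ is nilpotent (hence solvable) and the quotient $G/N \simeq \mathbb Z_p$ is cyclic, $G$ itself is solvable: this is just the standard fact that an extension of a solvable group by a solvable group is solvable, and it is also an immediate instance of Wielandt's theorem underlying Corollary \ref{ab-nilp}. Finally, invoking the Morita-invariance of solvability (\cite[Proposition 4.5 (i)]{ENO2}), since $\C(G, \omega, \mathbb Z_p, \alpha)$ is Morita equivalent to the pointed category $\C(G, \omega)$ and the latter is solvable precisely when $G$ is, I conclude that $\C$ is solvable.

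There is essentially no obstacle here: all of the substantive work has already been done in Lemma \ref{Fnilp-gt} (identifying the Frobenius structure of $G$) and in the preceding discussion of solvability under Morita equivalence. The only thing to be careful about is to note explicitly that we are applying the Morita-equivalence criterion for solvability (rather than trying to apply Corollary \ref{ab-nilp} literally, which concerns abelian Hopf algebra extensions rather than arbitrary group-theoretical categories); this distinction is purely cosmetic since the group-theoretic input is the same in both cases.
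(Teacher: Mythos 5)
Your proof is correct and follows essentially the same route the paper intends: Lemma \ref{Fnilp-gt} gives the Frobenius structure, Remark \ref{rmk-gt} gives $G \simeq N \rtimes \mathbb Z_p$ with $N$ nilpotent, hence $G$ is solvable, and solvability of $\C$ then follows from Morita invariance of solvability for group-theoretical categories. Your explicit remark that Corollary \ref{ab-nilp} is not literally applicable and that one should instead invoke the Morita-equivalence criterion directly is a fair (and slightly more careful) reading of the same argument.
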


\section{Solvability from character degrees}\label{six}

Let $p$ be a prime number. We study in this section fusion categories $\C$ such that $\cd(\C) = \{ 1, p \}$.

It is known that if  $G$ is a finite group, then this
assumption implies that the group $G$, and thus the category $\Rep G$, are solvable \cite{isaacs}.

\begin{remark}\label{tensor-prod} If $H$ is any semisimple Hopf algebra such that $\cd(H) = \{ 1, p\}$ and $G$ is any finite group, then the tensor product Hopf algebra
$A = H \otimes k^G$ also satisfies that $\cd(A) = \{ 1, p\}$
(since the irreducible modules of $A$ are tensor products of
irreducible modules of $H$ and $k^G$).

But $A$ is not solvable
unless $G$ is solvable; indeed, $k^G$ is a Hopf subalgebra as well
as a quotient Hopf algebra of $A$.
\end{remark}

\medbreak Our aim in this section is to prove some structural results on $\C$, regarding solvability, under additional restrictions.

The following theorem generalizes Proposition \ref{caso1sol}.

\begin{theorem}\label{solv-neargp} Let $\C$ be a near-group fusion category such that $\cd(\C) = \{ 1, p\}$. Then $\C$ is solvable. \end{theorem}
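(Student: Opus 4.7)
The plan is to invoke Siehler's classification of near-group fusion rings to reduce to two explicit sub-cases, and then prove solvability in each by computing $\FPdim \C$ and applying Burnside's theorem for fusion categories. Since $\C$ is a near-group category with $\cd(\C) \subseteq \{1, p\}$, the unique non-invertible simple object $\rho$ has Frobenius-Perron dimension $p$, and because $\rho^*$ is also a non-invertible simple of the same Frobenius-Perron dimension, uniqueness forces $\rho = \rho^*$. Writing $G = G(\C)$, $n = |G|$, and $\kappa = m(\rho, \rho \otimes \rho) \geq 0$, the near-group fusion rule reads
\[\rho \otimes \rho = \sum_{g \in G} g + \kappa \rho,\]
and taking Frobenius-Perron dimensions gives $p^2 = n + \kappa p$, so that $n = p(p - \kappa)$ with $0 \leq \kappa \leq p - 1$, and $\FPdim \C = n + p^2 = p(2p - \kappa)$.

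Next I would apply Siehler's classification \cite[Theorem 1.2]{Siehler}: for $\kappa = 0$, $G$ is abelian and $\C$ is of Tambara--Yamagami type; for $\kappa \geq 1$, $G$ is cyclic and either $n$ divides $\kappa$ or $\kappa = n - 1$. The subcase $n \mid \kappa$ with $\kappa \geq 1$ is ruled out by arithmetic: writing $\kappa = m n$ with $m \geq 1$ and substituting into $n = p(p - \kappa)$ yields $n(1 + m p) = p^2$, and since $\gcd(1 + m p, p) = 1$ this forces $1 + m p = 1$, which is impossible. Hence the only remaining option with $\kappa \geq 1$ is $\kappa = n - 1$, which combined with $n = p(p-\kappa)$ forces $n = p$; Siehler's theorem then further yields $G \simeq \mathbb Z_{q^\alpha - 1}$ for some prime $q$, so that $p + 1 = q^\alpha$ (this is precisely the situation of Proposition \ref{caso1sol}).

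To conclude, in either surviving case the total Frobenius-Perron dimension is a product of powers of at most two primes: if $\kappa = 0$ then $\FPdim \C = 2 p^2$, while if $\kappa = p - 1$ then $\FPdim \C = p(p + 1) = p\, q^\alpha$. Burnside's theorem for fusion categories \cite[Theorem 1.6]{ENO2} then implies that $\C$ is solvable in both cases. (Alternatively, when $\kappa = 0$ one observes that $\C_{\ad}$ is generated by $G$ alone, so $\C$ is a $\mathbb Z_2$-extension of the pointed, hence solvable, fusion category $\C_{\ad}$ with $G$ abelian of order $p^2$; solvability is preserved under $\mathbb Z_2$-extensions by \cite[Proposition 4.5]{ENO2}, giving the same conclusion.)

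The main obstacle I expect is extracting the right form of Siehler's dichotomy in the $\kappa \geq 1$ regime --- namely the cyclicity of $G$ together with the exclusive possibilities $n \mid \kappa$ or $\kappa = n - 1$ and, in the latter case, the finer identification $G \simeq \mathbb Z_{q^\alpha - 1}$ that was already the key input in Proposition \ref{caso1sol}. Once this input is in place, the arithmetic reduction above and the appeal to \cite[Theorem 1.6]{ENO2} are essentially formal.
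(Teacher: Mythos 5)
Your proposal is correct in its conclusions and follows essentially the same route as the paper: the same reduction to the two cases $(|G|,\kappa)=(p^2,0)$ (Tambara--Yamagami) and $(|G|,\kappa)=(p,p-1)$ (where Siehler's Theorem 1.2 gives $p+1=q^\alpha$), followed by Burnside's theorem for fusion categories \cite[Theorem 1.6]{ENO2} (the paper handles the $\kappa=0$ case via the $\mathbb Z_2$-extension of a pointed category instead, but you note that alternative too). The one place you diverge is how the case split is obtained. The paper does it elementarily: taking Frobenius--Perron dimensions in $m^2=\sum_g g+\kappa m$ gives $p\mid |G|$, and since $|G|=\FPdim\C_{pt}$ divides $\FPdim\C=|G|+p^2$, one gets $|G|\mid p^2$, hence $|G|\in\{p,p^2\}$. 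You instead extract from \cite[Theorem 1.2]{Siehler} a dichotomy (``for $\kappa\geq 1$, $G$ is cyclic and either $|G|$ divides $\kappa$ or $\kappa=|G|-1$'') and kill the first branch by arithmetic; your arithmetic is correct, but this is a heavier input than needed, and you should verify that Siehler's theorem actually asserts that dichotomy in the generality you use it --- the paper only ever invokes it in the already-reduced situation $\kappa=|G|-1$ to get the prime-power condition on $|G|+1$. If the dichotomy is not available at the cited strength, your reduction has a gap that is easily repaired by substituting the divisibility argument $|G(\C)|\mid\FPdim\C$ (via \cite[Proposition 8.15]{ENO}).
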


\begin{proof} In the notation of \cite{Siehler}, let the fusion rules of $\C$ be given by the pair $(G, \kappa)$, where $G$ is the group of invertible objects of $\C$ and $\kappa$ is a  nonnegative integer.
Then $\Irr(\C) = G \cup \{ m\}$, with the relation \begin{equation}\label{m2}m^2 = \sum_{g \in G}g + \kappa m.\end{equation}

The assumption on $\cd(\C)$ implies that $\FPdim m = p$. Hence $\FPdim \C = |G| + p^2$, and since $|G| = |G(\C)|$ divides $\FPdim \C$, we get that $|G| = p$ or $p^2$. (Note that taking Frobenius-Perron dimensions in \eqref{m2}, we get that  $G \neq 1$.)


\medbreak If $|G| = p^2$, then $\kappa = 0$ and $\C$ is a Tambara-Yamagami category \cite{TY}. Furthermore, $\C$ is a $\mathbb Z_2$-extension of a pointed category $\C(G, \omega)$. Then $\C$ is solvable in this case, by \cite[Proposition 4.5 (i)]{ENO2}.

\medbreak Suppose that $|G| = p$. Then  $\kappa = p-1$. As in the proof of Proposition \ref{caso1sol}, using \cite[Theorem 1.2]{Siehler}, we get that $\FPdim \C = p(p+1)$ equals $6$ or $p2^{\alpha}$. Then $\C$ is solvable, by \cite[Theorem 1.6]{ENO2}. \end{proof}

Our next result is the following theorem, for $\C = \Rep H$, which is a consequence of Proposition \ref{qt-1p}.
A stronger version of this result will be given in Subsection \ref{braided-fusion}, under additional dimension restrictions.

\begin{theorem} Suppose $H$ is of type $(1, p; p, n)$ as an algebra. Assume in addition that $H$ is quasitriangular. Then $H$ is solvable.
\end{theorem}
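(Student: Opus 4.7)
The plan is to deduce solvability of $H$ directly from the reduction to an abelian extension already carried out in Proposition \ref{qt-1p}, by dualizing that extension and invoking the results of Section \ref{solvability}.

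First, I would unpack what Proposition \ref{qt-1p} (and its proof) yields: under the hypotheses of the theorem, $H^*$ fits into an abelian \emph{central} exact sequence
$$k \to k^{\mathbb Z_p} \to H^* \to kF \to k,$$
and the application of Corollary \ref{Hnilp} in that proof rests on Proposition \ref{Fnilp}(ii), which in particular gives that $F$ is a nilpotent group. Let $G = F \bowtie \mathbb Z_p$ be the associated factorizable group; by Lemma \ref{lema-central}(i), centrality of the sequence forces $G \simeq F \rtimes \mathbb Z_p$.

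Next, I would dualize. The dual of an abelian exact sequence of Hopf algebras is again an abelian exact sequence in which the two constituents have been swapped, but whose underlying matched-pair group is the same. Concretely, $H$ itself fits into an abelian exact sequence
$$k \to k^F \to H \to k\mathbb Z_p \to k,$$
whose associated factorizable group is the same $G = F \rtimes \mathbb Z_p$ as above (only viewed via the opposite matched pair $(\mathbb Z_p, F)$).

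Finally, since $\mathbb Z_p$ is cyclic and $F$ is nilpotent, both factors in the matched pair are nilpotent, so Corollary \ref{ab-nilp} applies and gives that $H$ is solvable. Equivalently, $G = F \rtimes \mathbb Z_p$ is a nilpotent-by-cyclic group and hence solvable, after which Corollary \ref{G-solv} concludes. There is essentially no obstacle here: the structural work was carried out in Proposition \ref{qt-1p}, and the only added observation is that, under the symmetry of the matched-pair construction, the dual of a central abelian extension with nilpotent kernel is automatically solvable.
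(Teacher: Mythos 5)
Your proposal is correct and follows essentially the same route as the paper: both extract from Proposition \ref{qt-1p} the abelian (co)central exact sequence $k \to k^F \to H \to k\mathbb Z_p \to k$ with $F$ nilpotent, and conclude solvability from Corollary \ref{ab-nilp} (Wielandt's theorem applied to the matched pair of nilpotent groups $F$ and $\mathbb Z_p$). The only cosmetic difference is that you obtain the extension of $H$ by explicitly dualizing the central extension of $H^*$, while the paper invokes Lemma \ref{cocommutative} directly; the substance is identical.
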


\begin{proof} We have shown in Proposition \ref{qt-1p} that $H^*$ is nilpotent. Moreover, by Lemma \ref{cocommutative}, $H$ fits into an abelian cocentral exact sequence
$k \to k^F \to H \to k\mathbb Z_p \to k$, where $F$ is a nilpotent group.
Therefore, $H$ is solvable, by Corollary \ref{ab-nilp}.  \end{proof}

\medbreak In the remaining of this section, we restrict ourselves to the case where $\C = \Rep H$ for a semisimple Hopf algebra $H$.

\subsection{The case $p = 2$}

Let $H$ be a semisimple Hopf algebra such that $\cd(H)
\subseteq \{ 1, 2\}$. By \cite[Theorem 6.4]{BN}, one of the following
possibilities holds:
\begin{enumerate}\item[(i)] There is a cocentral abelian exact sequence $k \to k^F \to H \to
k\Gamma \to k$, where $F$ is a  finite group and $\Gamma \simeq
{\mathbb Z_2}^n$, $n \geq 1$, or
\item[(ii)] There is a central exact sequence $k \to k^U \to H \to
B \to k$, where $B = H_{\ad}$ is a proper Hopf algebra quotient, and
$U = U(\Rep H)$ is the universal grading group of the category
of finite dimensional $H$-modules.
\end{enumerate}
In particular, if $H = H_{\ad}$, then $H$ satisfies (i).

\medbreak As a consequence of this result we have:

\begin{theorem}\label{2-wgt} Let $H$ be a semisimple Hopf algebra such that $\cd(H)
\subseteq \{ 1, 2\}$. Then $H$ is weakly group-theoretical.

Moreover, if  $H = H_{\ad}$, then $H$ is group-theoretical. \end{theorem}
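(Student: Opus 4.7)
The plan is to invoke the dichotomy of \cite[Theorem 6.4]{BN} (recorded as (i)/(ii) just above the statement) and to run an induction on $\dim H$. The base case $\cd(H)=\{1\}$ is immediate: then $H$ is commutative, so $\Rep H$ is pointed, hence group-theoretical, and a fortiori weakly group-theoretical. For the inductive step, I would split into the two alternatives of the dichotomy.

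In alternative (i) the Hopf algebra $H$ fits into an abelian exact sequence $k \to k^F \to H \to k\Gamma \to k$ with $\Gamma \simeq \mathbb{Z}_2^n$; Remark \ref{H-group-theoretical} then identifies $\Rep H$ with $\C(G,\omega,F,1)$ for $G = F\bowtie\Gamma$, so $H$ is already group-theoretical, hence weakly group-theoretical. In alternative (ii) the central exact sequence $k \to k^U \to H \to B \to k$ with $B = H_{\ad}$ a proper quotient corresponds, via Proposition \ref{grading-hopf}, to a faithful $U$-grading on $\Rep H$ whose trivial component is $\Rep B$. Since $B$ is a Hopf quotient of $H$, every simple $B$-module pulls back to a simple $H$-module, so $\cd(B) \subseteq \cd(H) \subseteq \{1,2\}$; moreover $U \neq 1$ forces $\dim B < \dim H$. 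The inductive hypothesis applies to $B$, yielding that $\Rep B$ is weakly group-theoretical, and then $\Rep H$ is weakly group-theoretical as a $U$-extension of such a category (this is built into the recursive definition and recorded in \cite[Proposition 4.1]{ENO2}).

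For the \emph{moreover} clause, the condition $H = H_{\ad}$ rules out alternative (ii), whose conclusion requires $B = H_{\ad}$ to be a \emph{proper} quotient; hence alternative (i) must hold and $H$ is group-theoretical by Remark \ref{H-group-theoretical}.

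I expect no serious obstacle: \cite[Theorem 6.4]{BN} does all the heavy structural work, and the remaining task is really bookkeeping. The one point where I would be careful is the closure of the class of weakly group-theoretical fusion categories under group extensions; it is essentially tautological from the definition (successive group extensions and equivariantizations are the permitted operations), but I would cite \cite[Proposition 4.1]{ENO2} explicitly. The other mild check, namely that $\cd(B) \subseteq \{1,2\}$ for a Hopf quotient $B$ of $H$, follows at once from the fact that pullback along $H \twoheadrightarrow B$ embeds $\Rep B$ as a fusion subcategory of $\Rep H$.
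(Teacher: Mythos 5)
Your proposal is correct and follows essentially the same route as the paper: invoke the dichotomy of \cite[Theorem 6.4]{BN}, conclude group-theoreticity directly in case (i) via Remark \ref{H-group-theoretical}, and in case (ii) induct on dimension using Proposition \ref{grading-hopf} together with \cite[Proposition 4.1]{ENO2}, with the \emph{moreover} clause following because $H = H_{\ad}$ forces case (i). The only (harmless) addition is your explicit base case and the justification that $\cd(B)\subseteq\{1,2\}$, which the paper records parenthetically.
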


\begin{proof} The assumption implies that $H$ satisfies (i) or (ii) above. If $H$ satisfies (i), then  $H$ is group-theoretical, by Remark \ref{H-group-theoretical}.

Otherwise, $H$ satisfies (ii), and then the category $\Rep H$ is a $U$-extension of $\Rep B$, in view of Proposition \ref{grading-hopf}.
By an inductive argument, we may assume that $B$ is weakly group-theoretical (note that $\cd(B)
\subseteq \{ 1, 2\}$). Therefore so is $H$, by \cite[Proposition 4.1]{ENO2}.
\end{proof}

We next discuss conditions that guarantee the solvability of $H$. The following result is proved in \cite{BN}.

\begin{proposition} \cite[Proposition 6.8]{BN}. Suppose $H$ is of type $(1, 2; 2, n)$ as an algebra. Then $H$ is
cocommutative. \end{proposition}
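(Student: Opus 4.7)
Assume $n\geq 1$ (else $H$ is commutative of dimension $2$, hence $k^{\mathbb Z_{2}}$, trivially cocommutative). Let $g$ denote the nontrivial element of $G(H^{*})$. The plan is to reduce $H$ to an abelian extension of $k\mathbb Z_{2}$, identify the underlying matched pair as a dihedral one, and then invoke a cohomological triviality in $\Opext$.

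First I would collect the standard character-theoretic facts: for every irreducible character $\chi$ of degree $2$, the divisibility $2=\deg\chi\mid |G[\chi]|$ (see the remark in Subsection~\ref{irr-fusion}), together with $|G[\chi]|\leq|G(H^{*})|=2$, forces $G[\chi]=\{1,g\}$ and
\[
\chi\chi^{*} \;=\; 1 \,+\, g \,+\, \lambda,\qquad \deg\lambda=2.
\]
In particular $g$ is a constituent of $\chi\chi^{*}$ for every degree-$2$ simple, so $g\in\C_{\ad}$, i.e.\ $\C_{pt}\subseteq\C_{\ad}$.

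Next, I would apply \cite[Theorem 6.4]{BN}: either (i) $H$ fits into a cocentral abelian sequence $k\to k^{F}\to H\to k\Gamma\to k$ with $\Gamma\simeq \mathbb Z_{2}^{m}$, or (ii) there is a nontrivial central extension $k\to k^{U}\to H\to H_{\ad}\to k$. To rule out (ii), consider the universal grading $\Rep H=\bigoplus_{u\in U}\C_{u}$: since both invertibles lie in $\C_{e}=\C_{\ad}$, the trivial component has $\FPdim\equiv 2\pmod{4}$, while any nontrivial $\C_{u}$ contains only degree-$2$ simples and has $\FPdim\equiv 0\pmod{4}$; equality across a faithful grading is then impossible unless $U=1$. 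Hence we are in case (i). Dualizing the sequence gives $k^{\Gamma}\hookrightarrow H^{*}$, so $|\Gamma|=|G(k^{\Gamma})|\leq|G(H^{*})|=2$ and $\Gamma=\mathbb Z_{2}$.

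In the resulting bicrossed product $H=k^{F}\,{}^{\tau}\#_{\sigma}\,k\mathbb Z_{2}$, the action of $F$ on $\mathbb Z_{2}$ is trivial (both by cocentrality and because $\Aut\mathbb Z_{2}=1$), while $\mathbb Z_{2}$ acts on $F$ by some involution $\phi$. Masuoka's multiplication formula for the bicrossed product decomposes $H$ as an algebra into one copy of $k\mathbb Z_{2}\simeq k^{2}$ per $\phi$-fixed point, plus one copy of $M_{2}(k)$ per $\phi$-orbit of size $2$; matching this with the type $(1,2;2,n)$ forces $\phi$ to fix only $e$ and $|F|=1+2n$. By the classical argument for fixed-point-free involutions (the map $x\mapsto x^{-1}\phi(x)$ is a bijection, forcing $\phi(y)=y^{-1}$ and, since $\phi$ is an automorphism, $F$ abelian), $F$ is abelian of odd order and $\phi$ is inversion; thus $G:=F\rtimes\mathbb Z_{2}$ is a generalized dihedral group.

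The final step, which I expect to be the main obstacle, is to conclude that the class of $H$ in $\Opext(k\mathbb Z_{2},k^{F})$ is trivial, so that $H\simeq kG$ and is therefore cocommutative. For this I would use the Kac exact sequence
\[
H^{2}(G,k^{\times})\to H^{2}(F,k^{\times})\oplus H^{2}(\mathbb Z_{2},k^{\times})\to \Opext(k\mathbb Z_{2},k^{F})\to H^{3}(G,k^{\times}),
\]
noting that $H^{2}(\mathbb Z_{2},k^{\times})=1$ and that the induced action of $\mathbb Z_{2}$ on $H^{2}(F,k^{\times})\cong \wedge^{2}\widehat F$ is trivial (since inversion on $\widehat F$ acts trivially on alternating forms). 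The remaining technical point is to verify surjectivity of the restriction $H^{2}(G,k^{\times})\to H^{2}(F,k^{\times})$ and vanishing of the $H^{3}$-obstruction for generalized dihedral $G$ with $F$ abelian of odd order; these cohomological inputs yield $\Opext(k\mathbb Z_{2},k^{F})=0$ and complete the proof.
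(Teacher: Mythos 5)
The paper does not actually prove this statement: it is imported verbatim from \cite[Proposition 6.8]{BN}, so there is no internal proof to compare against. Judged on its own terms, your architecture is sound and, as far as I can check, every step is either correct or correctly reducible to a standard fact. The reduction to case (i) of \cite[Theorem 6.4]{BN} via the mod-$4$ count on the universal grading is valid (you should cite that all components of a faithful grading have equal Frobenius--Perron dimension, \cite[Theorem 3.10]{gel-nik}, which is what makes ``equality across a faithful grading'' meaningful); the identification $\Gamma\simeq\mathbb Z_2$ by counting group-likes of $H^*$ is correct; the orbit count forcing a fixed-point-free involution, hence $F$ abelian of odd order with $\phi=$ inversion, is the classical argument and is fine.

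The one step you leave open --- vanishing of $\Opext(k\mathbb Z_2,k^F)$ --- is true and closable by coprime-index transfer arguments, but as written it is an assertion, not a proof, and it is the entire content of the cocommutativity conclusion. To close it: (a) the image of the restriction $H^2(G,k^\times)\to H^2(F,k^\times)$ lies in the invariants $H^2(F,k^\times)^{\mathbb Z_2}$, and since $|F|$ is odd and $[G:F]=2$, the composite $\operatorname{cor}\circ\operatorname{res}$ is multiplication by $2$, an automorphism of the odd-order group $H^2(F,k^\times)\cong\wedge^2 F$; hence restriction surjects onto the invariants, which by your observation about inversion acting trivially on $\wedge^2\widehat F$ is all of $H^2(F,k^\times)$. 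Together with $H^2(\mathbb Z_2,k^\times)=1$ this kills the image of $H^2(F,k^\times)\oplus H^2(\mathbb Z_2,k^\times)$ in $\Opext$. (b) For each prime $q$, the $q$-primary component of $H^3(G,k^\times)$ restricts injectively to a Sylow $q$-subgroup of $G$; every Sylow subgroup of $G$ lies in $F$ (for $q$ odd) or is conjugate to $\mathbb Z_2$ (for $q=2$), so $H^3(G,k^\times)\to H^3(F,k^\times)\oplus H^3(\mathbb Z_2,k^\times)$ is injective and the map $\Opext(k\mathbb Z_2,k^F)\to H^3(G,k^\times)$ has zero image. Exactness of the Kac sequence then gives $\Opext(k\mathbb Z_2,k^F)=0$, so $H\simeq k^F\#k\mathbb Z_2\simeq k(\widehat F\rtimes\mathbb Z_2)$, which is cocommutative because $F$ is abelian. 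With these two paragraphs of group cohomology supplied, your proof is complete and independent of \cite{BN} beyond Theorem 6.4.
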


The proposition implies that such a Hopf algebra $H$  is isomorphic to a group algebra $kG$ for some finite group $G$. By the assumption on the algebra structure of $H$, the group $G$, and then also $H$, are solvable.

The next lemma gives a sufficient condition for $H$ to be solvable.

\begin{lemma}\label{suff-solv} Suppose $\cd(H) \subseteq \{ 1, 2\}$ and $H = H_{\ad}$. Then $H$ is solvable if and only if the group $F$ in (i) is solvable.
\end{lemma}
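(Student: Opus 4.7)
The plan is to reduce the lemma to Corollary \ref{G-solv} by showing that the hypothesis $H = H_{\ad}$ forces us into case (i) of the dichotomy from \cite[Theorem 6.4]{BN} recalled above, and then to use the structure of a cocentral abelian extension together with the fact that $\Gamma \simeq \mathbb{Z}_2^n$ is solvable.

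First, I would rule out case (ii). In that case there is a central exact sequence $k \to k^U \to H \to B \to k$ where $B = H_{\ad}$ is a \emph{proper} quotient of $H$ and $U = U(\Rep H)$ is the universal grading group. Under the hypothesis $H = H_{\ad}$ this would force $B = H$, hence $U$ trivial, contradicting properness (unless $H$ is trivial, in which case the statement is vacuous). Thus case (i) holds, and we have a cocentral abelian exact sequence
\begin{equation*}
k \to k^F \to H \to k\Gamma \to k, \qquad \Gamma \simeq \mathbb{Z}_2^n.
\end{equation*}

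Next, Corollary \ref{G-solv} says that $H$ is solvable if and only if the associated factorizable group $G = F \bowtie \Gamma$ is solvable. By Lemma \ref{lema-central}(ii), cocentrality of the sequence means that the action $\vartriangleright: \Gamma \times F \to F$ is trivial, so $G$ is actually the semidirect product $G \simeq F \ltimes \Gamma$ with respect to $\vartriangleleft: \Gamma \times F \to \Gamma$. In particular $\Gamma$ is a normal subgroup of $G$ with quotient isomorphic to $F$.

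Finally, since $\Gamma \simeq \mathbb{Z}_2^n$ is abelian, hence solvable, an extension of $F$ by $\Gamma$ is solvable if and only if $F$ is solvable. Combining with Corollary \ref{G-solv}, we conclude $H$ is solvable if and only if $F$ is solvable. I do not expect any real obstacle here: the argument is essentially bookkeeping, the only subtle point being the exclusion of case (ii), which is immediate from the definition of $H_{\ad}$ and the universal grading.
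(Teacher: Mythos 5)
Your proposal is correct and follows essentially the same route as the paper: rule out case (ii) via $H = H_{\ad}$, invoke Corollary \ref{G-solv} to reduce solvability of $H$ to that of $G = F\bowtie\Gamma$, use cocentrality (Lemma \ref{lema-central}) to see that $G$ is a semidirect product with $\Gamma$ normal, and conclude from solvability of the $2$-group $\Gamma$. The paper's proof is just a terser version of the same argument, leaving the final "extension of solvable by solvable" step implicit.
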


\begin{proof} Since $H = H_{\ad}$, then $H$ satisfies (i). Therefore $H$ is solvable if and only if the relevant factorizable group $G = F\bowtie \Gamma$ is solvable, by Corollary \ref{G-solv}. Also, since the sequence (i) is cocentral, then $G$ is a semidirect product: $G = F\rtimes \Gamma$. This proves the lemma. \end{proof}

\begin{remark} Suppose that $H$ has a faithful irreducible character $\chi$ of degree $2$, such that $\chi\chi^* = \chi^*\chi$. Then it follows from  \cite[Theorem 3.5]{BN} that  $H$ fits into a central abelian exact sequence  $k\to k^{\mathbb Z_m} \to H \to  kT \to k$,
for some polyhedral group $T$ of even order and for some $m\geq 1$. In particular, since $\cd(H) = \{ 1, 2 \}$, then $T$ is necessarily cyclic or dihedral (see, for instance, \cite[pp. 10]{BN} for a description of the polyhedral groups and their character degrees). Therefore $H$ is solvable in this case.

The assumption on $\chi$ is satisfied in the case where $H$ is quasitriangular; so that the conclusion holds in this case. We shall show in the next subsection that every quasitriangular semisimple Hopf algebra with $\cd (H) \subseteq \{ 1, 2\}$ is also solvable.
\end{remark}

We next prove some lemmas that will be useful in the next subsection.

\begin{lemma}\label{sub-qt} Suppose $\cd(H) \subseteq \{ 1, 2\}$ and let $K$ be a Hopf subalgebra or quotient Hopf algebra of $H$. Then $\cd(K) \subseteq \{ 1, 2\}$.
\end{lemma}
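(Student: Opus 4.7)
The plan is to handle the two cases (quotient and subalgebra) separately, in both cases relating irreducible representations of $K$ to irreducible representations of $H$ via standard Hopf-algebraic functors.

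For a quotient Hopf algebra $\pi \colon H \to K$, the argument is essentially immediate: any irreducible $K$-module $V$ becomes an irreducible $H$-module via pullback along $\pi$, and its dimension is unchanged. Hence $\cd(K) \subseteq \cd(H) \subseteq \{1,2\}$. I would just write this sentence and move on.

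The substantive case is when $K \subseteq H$ is a Hopf subalgebra. Given a simple $K$-module $V$, I would consider the induced module $\Ind_K^H V = H \otimes_K V$, which is a nonzero $H$-module since $\dim \Ind_K^H V = [H:K]\,\dim V > 0$ (using that $H$ is free as a $K$-module by the Nichols--Zoeller theorem, though one only really needs nonvanishing of the induced module). Pick any simple $H$-submodule $W$ of $\Ind_K^H V$; then by Frobenius reciprocity,
\begin{equation*}
0 \neq \Hom_H(\Ind_K^H V, W) \simeq \Hom_K(V, W|_K),
\end{equation*}
and since $V$ is simple this gives an injection $V \hookrightarrow W|_K$. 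Consequently $\dim V \leq \dim W$, and by hypothesis $\dim W \in \{1,2\}$, so $\dim V \in \{1,2\}$ as well.

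I do not expect any obstacle here: the proof reduces entirely to the standard Frobenius reciprocity adjunction between restriction and induction for a Hopf subalgebra, together with the observation that pullback of simple modules along a surjective Hopf map is simple. The only mild point to mention is the use of the Nichols--Zoeller theorem to guarantee that $\Ind_K^H V$ is nonzero (equivalently, faithfully flat descent for Hopf subalgebras), which is cited elsewhere in the paper and needs no further comment.
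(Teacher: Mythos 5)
Your proof is correct and follows essentially the same route as the paper: the paper disposes of the quotient case in one line and, for a Hopf subalgebra $K \subseteq H$, simply cites the surjectivity of the restriction functor $\Rep H \to \Rep K$, which is exactly the fact you establish via induction, Frobenius reciprocity, and Nichols--Zoeller. Your write-up just makes that one-line citation explicit.
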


\begin{proof} We only need to show the claim when $K \subseteq H$ is a Hopf subalgebra.
In this case, the statement follows from surjectivity of the
restriction functor $\Rep H \to \Rep K$. \end{proof}

The lemma has the following immediate consequence:

\begin{corollary}\label{2-gdeh} If $\cd(H) \subseteq \{ 1, 2\}$, then the group $G(H)$ is solvable. \end{corollary}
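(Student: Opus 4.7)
The plan is very short because this is essentially a two-step reduction to a classical theorem on finite group character degrees. First I would observe that $kG(H)$ is a Hopf subalgebra of $H$ via the canonical inclusion of the group-like elements. Applying Lemma \ref{sub-qt} to this inclusion immediately gives $\cd(kG(H)) \subseteq \{1,2\}$.

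Next I would identify $\Irr(kG(H))$ with $\Irr(G(H))$: since $kG(H)$ is the group algebra of the finite group $G(H)$, its irreducible characters as a Hopf algebra are exactly the irreducible characters of $G(H)$ as a finite group, and the two notions of degree coincide. Therefore $\cd(G(H)) \subseteq \{1,2\}$ as a set of character degrees in the classical sense.

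The final step invokes the classical result recalled in the introduction: any finite group $G$ with $|\cd(G)| \leq 3$ is solvable (see \cite[Chapter 12]{isaacs}). Since $|\cd(G(H))| \leq 2$, the group $G(H)$ is solvable, which is exactly the claim.

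There is no real obstacle here; the only thing to be a bit careful about is the identification between Hopf-algebra irreducible degrees of $kG(H)$ and the character degrees of the abstract group $G(H)$, but this is immediate from the fact that $\Rep kG(H) \simeq \Rep G(H)$ as fusion categories, with Frobenius--Perron dimension equal to vector-space dimension on both sides.
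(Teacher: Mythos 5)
Your proposal is correct and is exactly the argument the paper intends: the corollary is stated as an ``immediate consequence'' of Lemma \ref{sub-qt} applied to the Hopf subalgebra $kG(H)\subseteq H$, giving $\cd(G(H))\subseteq\{1,2\}$, after which solvability follows from the classical fact (recalled in the introduction, \cite[Chapter 12]{isaacs}) that a finite group with at most three irreducible character degrees is solvable. Your remark identifying Hopf-algebra degrees of $kG(H)$ with ordinary character degrees of $G(H)$ is the right (and only) point needing care, and it holds for the reason you give.
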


\begin{lemma}\label{h-h*} Suppose $\cd(H), \cd(H^*) \subseteq \{ 1, 2\}$. Then
$H$ is solvable. \end{lemma}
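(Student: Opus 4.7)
The plan is to invoke the structural dichotomy of \cite[Theorem 6.4]{BN} for semisimple Hopf algebras with $\cd(H)\subseteq\{1,2\}$ and to exploit the symmetric hypothesis $\cd(H^*)\subseteq\{1,2\}$ to close an induction on $\dim H$ (the base case $\dim H = 1$ being trivial). By \cite[Theorem 6.4]{BN}, $H$ satisfies one of two alternatives: either (i) $H$ fits into a cocentral abelian exact sequence $k \to k^F \to H \to k\Gamma \to k$ with $\Gamma\simeq \mathbb{Z}_2^n$, or (ii) $H$ fits into a central exact sequence $k \to k^U \to H \to B \to k$ with $B = H_{\ad}$ a proper Hopf algebra quotient.

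In case (i), I would dualize to obtain a central abelian exact sequence $k \to k^\Gamma \to H^* \to kF \to k$, exhibiting $kF$ as a quotient Hopf algebra of $H^*$. Lemma \ref{sub-qt} applied to $H^*$ then yields $\cd(F)\subseteq\{1,2\}$, so the classical theorem on finite groups with at most two character degrees (see \cite[Theorem 12.11]{isaacs} and the solvability statement recalled in the introduction) forces $F$ to be solvable. Then the associated matched pair group is a semidirect product of the solvable $F$ with the abelian $\Gamma$, hence solvable, and Corollary \ref{G-solv} delivers solvability of $H$.

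In case (ii), the key move is to pass the symmetric hypothesis to $B$: Lemma \ref{sub-qt} applied to $H$ gives $\cd(B)\subseteq\{1,2\}$, and applied to $H^*$ it gives $\cd(B^*)\subseteq\{1,2\}$ via the Hopf subalgebra embedding $B^*\hookrightarrow H^*$. Since $B$ is a \emph{proper} quotient, the inductive hypothesis applies and $B$ is solvable. Dualizing the central sequence yields a cocentral sequence $k \to B^* \to H^* \to kU \to k$, so $kU$ is a quotient of $H^*$, and a third application of Lemma \ref{sub-qt} combined with \cite[Theorem 12.11]{isaacs} shows $U$ is a solvable group. By Proposition \ref{grading-hopf}, $\Rep H$ is a faithful $U$-extension of $\Rep B$, and decomposing $U$ along a composition series of cyclic groups of prime order (together with closure of solvability under extensions, \cite[Proposition 4.5]{ENO2}) exhibits $\Rep H$ as an iterated cyclic prime extension of the solvable category $\Rep B$; hence $\Rep H$ is solvable. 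I expect the main obstacle to be the bookkeeping in case (ii) that keeps the symmetric hypothesis alive across the induction, since quotients on the $H$ side correspond to Hopf subalgebras on the $H^*$ side, forcing Lemma \ref{sub-qt} to be applied in tandem to both $H$ and $H^*$.
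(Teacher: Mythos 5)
Your proof is correct, and its skeleton is the same as the paper's: induction on $\dim H$, the dichotomy of \cite[Theorem 6.4]{BN}, and Lemma \ref{sub-qt} applied in tandem to $H$ and $H^*$ so that the symmetric hypothesis survives the induction. The one genuine difference is how the branch $H=H_{\ad}$ (your case (i)) is closed. The paper does not touch the group $F$ directly: it observes that cocentrality makes $k^\Gamma$ a nontrivial central Hopf subalgebra of $H^*$, so $H^*\neq H^*_{\ad}$, re-runs the extension argument on the dual to conclude $H^*$ is solvable, and then transfers solvability back to $H$ (implicitly using that $\Rep H$ and $\Rep H^*$ are Morita equivalent, so \cite[Proposition 4.5 (i)]{ENO2} applies). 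You instead note that $kF$ is a quotient of $H^*$, deduce $\cd(F)\subseteq\{1,2\}$ from Lemma \ref{sub-qt}, get solvability of $F$ from the classical Isaacs result, and conclude via the semidirect product structure of $G=F\bowtie\Gamma$ and Corollary \ref{G-solv}. Your route is slightly more self-contained in that branch (no detour through the dual, no appeal to invariance of solvability under duality) and is essentially the argument of Lemma \ref{suff-solv} plus the observation that the second hypothesis controls $F$; the paper's route keeps everything inside the single inductive mechanism. Both are valid, and your handling of case (ii) coincides with the paper's treatment of the case $H_{\ad}\neq H$.
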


\begin{proof} By induction on the dimension of $H$.

Consider  the universal grading group $U$ of the category $\Rep
H$. Then $H^* \to kU$ is a quotient Hopf algebra and therefore
$\cd(U)\subseteq \{ 1, 2\}$, by Lemma \ref{sub-qt}. This implies
that the group $U$ is solvable.

\medbreak Suppose first $H_{\ad} \neq H$.  In view of Lemma \ref{sub-qt},
we also have $\cd(H_{\ad})$, $\cd(H_{\ad}^*) \subseteq \{ 1, 2\}$.
By the inductive assumption $H_{\ad}$ is solvable. By \cite[Proposition 4.5 (i)]{ENO2}, $H$ is
solvable, since $\Rep H$ is a $U$-extension of $\Rep H_{\ad}$.

\medbreak It remains to consider the case where $H_{\ad} = H$. As pointed
out at the beginning of this subsection,  it follows from
\cite[Theorem 6.4]{BN} that in this case $H$ satisfies condition
(i), that is, $H$ fits into a cocentral abelian exact sequence $k
\to k^F \to H \to k\Gamma \to k$, with $|\Gamma| > 1$ and $\Gamma$
abelian.

In particular, $k^{\Gamma} \subseteq H^*$ is a nontrivial central
Hopf subalgebra, implying that $H^* \neq  H^*_{\ad}$. The
inductive assumption implies, as before, that $H^*_{\ad}$ and thus
also $H^*$ is solvable. Then $H$ also is. This finishes the proof
of the lemma.\end{proof}

\subsection{The quasitriangular case}

We shall assume in this subsection that $H$ is quasitriangular. Let $R \in H \otimes H$ be an $R$-matrix. We keep the notation in Subsection \ref{quasitriangular}.

\begin{remark} Notice that, since the category $\Rep H$ is braided, then the
universal grading group $U = U(\Rep H)$ is abelian (and in
particular, sol\-vable). \end{remark}

\medbreak The following is the main result of this subsection.

\begin{theorem}\label{solv-2} Let $H$ be a  quasitriangular semisimple Hopf algebra such that $\cd(H)
\subseteq \{ 1, 2\}$. Then $H$ is solvable. \end{theorem}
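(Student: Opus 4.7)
The plan is to reduce the theorem to Lemma \ref{h-h*} by exploiting the quasitriangular structure to force $\cd(H^*) \subseteq \{1,2\}$, possibly after an inductive reduction on $\dim H$. The main players are the Hopf subalgebras $H_+, H_- \subseteq H$ associated to the $R$-matrix (Subsection \ref{quasitriangular}) and Radford's Hopf algebra surjection $D(H_-) \twoheadrightarrow H_R$.

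First, by Lemma \ref{sub-qt} we have $\cd(H_+), \cd(H_-) \subseteq \{1,2\}$. From the Hopf algebra isomorphism $H_+ \simeq (H_-^*)^{\cop}$ of Subsection \ref{quasitriangular}, dualization yields an algebra isomorphism $(H_+)^* \simeq H_-^{\op}$; since opposite algebras share the same irreducible dimensions, $\cd((H_+)^*) = \cd(H_-) \subseteq \{1,2\}$. Applying Lemma \ref{h-h*} to $H_+$ and $H_-$, both are solvable Hopf algebras.

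If $H_- = H$, the Hopf algebra map $f_{R_{21}} : H^* \to H^{\op}$ is surjective; dualizing, one obtains a Hopf algebra embedding $(H^*)^{\cop} \hookrightarrow H$. Since $(H^*)^{\cop}$ and $H^*$ coincide as algebras, Lemma \ref{sub-qt} gives $\cd(H^*) \subseteq \cd(H) \subseteq \{1,2\}$, so Lemma \ref{h-h*} closes the argument. The case $H_+ = H$ is symmetric, using $f_R$. If instead both $H_+, H_- \subsetneq H$ are proper, then the minimal quasitriangular subalgebra $H_R = H_+H_-$ is itself solvable: Radford's surjection $D(H_-) \twoheadrightarrow H_R$ together with solvability of $H_-$ imply, via $\Rep D(H_-) \simeq \mathcal Z(\Rep H_-)$ and the closure of solvability under Drinfeld centers and quotients (\cite[Proposition 4.5]{ENO2}), that $\Rep H_R$ is solvable.

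The remaining task is to propagate solvability from $H_R$ to $H$ when $H_R \subsetneq H$. Here I would proceed by induction on $\dim H$, exploiting that $\Rep H$ is braided so the universal grading group $U = U(\Rep H)$ is abelian: if $H_{\ad} \neq H$, the inductive hypothesis applied to $\Rep H_{\ad}$ (a braided fusion subcategory with $\cd \subseteq \{1,2\}$ and strictly smaller Frobenius--Perron dimension) combined with the solvable $U$-extension structure yields solvability of $\Rep H$ by \cite[Proposition 4.5]{ENO2}. The main obstacle is the boundary case where $H_{\ad} = H$ and $H_R \subsetneq H$ simultaneously; here the structural result \cite[Theorem 6.4]{BN} produces a cocentral abelian extension $k \to k^F \to H \to k\Gamma \to k$ with $\Gamma \simeq \mathbb Z_2^n$, and one must exploit the quasitriangular structure---for instance, through the central inclusion of Theorem \ref{central-gplk} applied to the factorizable double $D(H)$, which forces $G(H^*)$ to sit inside $G(D(H)) \cap Z(D(H))$---in order to show that the group $F$ is solvable, whence solvability of $H$ follows from Corollary \ref{G-solv}.
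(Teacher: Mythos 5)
Your proposal tracks the paper's own argument closely through its first half: the reduction via $H_\pm$ and Lemma \ref{sub-qt}, the application of Lemma \ref{h-h*} to get solvability of $H_-$ (and hence of $D(H_-)$ and its quotient $H_R$), and the inductive reduction to $H=H_{\ad}$ using that the universal grading group of a braided category is abelian. All of that is correct and is essentially what the paper does (your separate treatment of the cases $H_-=H$ and $H_+=H$ is harmless but unnecessary, since $H_\pm\subseteq H_R$ and solvability of $H_R$ already covers them).

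The genuine gap is the final case, which is precisely the heart of the paper's proof and which you leave as an unproven assertion: when $H=H_{\ad}$ and $H_R\subsetneq H$, you say one ``must exploit the quasitriangular structure \dots to show that the group $F$ is solvable,'' suggesting Theorem \ref{central-gplk} for $D(H)$, but you do not produce the argument, and the hint you give is not the mechanism that works. The paper's route is: since $H=H_{\ad}$, one has $G(H)\cap Z(H)=1$; by Radford's result \cite[Proposition 3]{radford}, $f_{R}$ and $f_{R_{21}}$ kill $\widehat\Gamma\subseteq G(H^*)\cap Z(H^*)$ and hence factor through $H^*/H^*(k\widehat\Gamma)^+\simeq kF$, forcing $H_+$, $H_-$, and therefore $H_R$ to be cocommutative (and commutative), so that $\Phi_R(H^*)\subseteq kG(H)$. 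Then \cite[Theorem 4.11]{qt-quotient} yields an exact sequence $k\to kT\to H\to \overline H\to k$ with $T\subseteq G(H)$ abelian and $\overline H$ \emph{triangular}, hence $\overline H\simeq (kL)^J$ with $L$ solvable because $\cd(L)\subseteq\{1,2\}$; restricting $\pi:H\to\overline H$ to $k^F$ produces a group extension $1\to S\to F\to\overline F\to 1$ with $\overline F$ abelian and $S$ solvable, whence $F$ and then $H$ are solvable via Lemma \ref{suff-solv}. None of this chain is recoverable from the single observation that $G(H^*)$ embeds in $G(D(H))\cap Z(D(H))$, so as written your proof is incomplete exactly where the real work lies.
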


\begin{proof} If $\cd(H) = \{ 1\}$, then $H$ is commutative and, because it is quasitriangular, isomorphic to the group algebra of an abelian
group. Hence we may assume that $\cd(H) = \{ 1, 2\}$.

Consider the Hopf subalgebras $H_+, H_- \subseteq H$. By Lemma
\ref{sub-qt}, we have $\cd(H_+)$, $\cd(H_-) \subseteq \{ 1, 2 \}$.
Then $\cd(H_-), \cd(H_-^*) \subseteq \{ 1, 2 \}$, since $(H_-^*)^{\cop}
\simeq H_+$.

By Lemma \ref{h-h*}, $H_-$ is solvable. Therefore the Drinfeld
double $D(H_-)$ and its homomorphic image $H_R$ are also solvable.

We may thus assume that $H_R \subsetneq H$.

Observe that, being a quotient of $H$, $H_{\ad}$ is also
quasitriangular and satisfies $\cd(H_{\ad}) \subseteq \{ 1, 2 \}$.
Hence, by induction, we may also assume that $H = H_{\ad}$, and in
particular, $G(H) \cap Z(H) = 1$. Indeed, $\Rep H$ is a
$U$-extension of $\Rep H_{\ad}$ and the group $U$ is abelian, as
pointed out before.

\medbreak Therefore $H$ fits into a cocentral abelian exact
sequence $k \to k^F \to H \to k\Gamma \to k$, where $1 \neq
\Gamma$ is elementary abelian of exponent $2$.

In view of Lemma \ref{suff-solv}, it will be enough to show that the group $F$ is solvable.

\medbreak We have $\widehat \Gamma \subseteq G(H^*)\cap Z(H^*)$.
By \cite[Proposition 3]{radford}, $f_{R_{21}}(G(H^*)\cap Z(H^*))
\subseteq G(H) \cap Z(H)$. Hence we may assume that
$f_{R_{21}}|_{\widehat\Gamma} = 1$ and similarly
$f_R|_{\widehat\Gamma} = 1$. Thus $f_R$, $f_{R_{21}}$
factorize through the quotient $H^*/H^*(k\widehat \Gamma)^+ \simeq
kF$.

Therefore $H_+ = f_R(H^*)$ and $H_- = f_{R_{21}}(H^*)$ are
cocommutative. (Then they are also commutative, since $H_+ \simeq
H_-^{*\cop}$.) In particular, $H_R = H_+H_-$ is cocommutative.
Hence $\Phi_R(H^*) \subseteq H_R \subseteq kG(H)$.

\medbreak By \cite[Theorem 4.11]{qt-quotient}, $K = \Phi_R(H^*)$
is a commutative (and cocommutative) normal Hopf subalgebra, which is necessarily
solvable, since $H_R$ is. In addition,  $\Phi_R(H^*) \simeq kT$,
where $T \subseteq G(H)$ is an abelian subgroup (\cite[Example 2.1]{qt-quotient}), and there is an
exact sequence of Hopf algebras
\begin{equation*}k \to kT \to H \overset{\pi}\to \overline H \to k, \end{equation*} where
$\overline H$ is a certain (canonical) triangular Hopf algebra.

Since $\overline H$ is triangular, then $\overline H \simeq
(kL)^J$, is a twisting of the group algebra of some finite group
$L$. Because $\cd(L) = \cd(\overline H) \subseteq \{ 1, 2\}$, $L$
must be solvable. Hence $\overline H$ is solvable, since $\Rep
\overline H \simeq \Rep L$.

\medbreak The map $\pi: H \to \overline H$ induces, by restriction to the Hopf subalgebra $k^F \subseteq H$, an exact sequence
\begin{equation*}k \to kT\cap k^F \to k^F \overset{\pi|_{k^F}}\to \pi(k^F) \to k. \end{equation*}
We have $kT\cap k^F = k^{\overline F}$ and $\pi(k^F) = k^S$, where
$\overline F$ and $S$ are a quotient and a subgroup of $F$,
respectively, in such a way that the exact sequence above
corresponds to an exact sequence of groups
\begin{equation*}1 \to S \to F \to \overline F \to 1. \end{equation*}
Now, $\overline F$ is abelian, because $k^{\overline F} = kT\cap
k^F$ is cocommutative, and $S$ is solvable, because $k^S$ is a
Hopf subalgebra of $\overline H$. Therefore $F$ is solvable. This
implies that $H$ is solvable and finishes the proof of the
theorem. \end{proof}

\section{Odd dimensional fusion categories}\label{seven}

Along this section, $p$ will be a prime number.  Let $\mathcal C$
be a fusion category over $k$. Recall that the set of irreducible degrees of $\C$ was defined as  \begin{equation*}\cd(\mathcal C) =  \{
\FPdim x|\, x \in \Irr \C\}.
\end{equation*}

The fusion categories that we shall consider in this section are all \emph{integral}, that is, the Frobenius-Perron dimensions of objects of $\C$ are (natural) integers. By \cite[Theorem 8.33]{ENO},  $\C$ is isomorphic to the category of representations of some finite dimensional semisimple quasi-Hopf algebra.

\subsection{Odd dimensional weakly group-theoretical fusion categories} The following result is a consequence of the Feit-Thompson theorem \cite{feit-thompson}.

\begin{proposition}\label{odd-wgt} Let $\mathcal C$ be a weakly group-theoretical fusion category and assume that $\FPdim \mathcal C$ is an odd integer. Then $\mathcal C$ is solvable. \end{proposition}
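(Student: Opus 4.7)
The plan is to combine Morita invariance with the Feit--Thompson theorem. By definition, $\C$ is weakly group-theoretical, so there exists an indecomposable algebra $A\in\C$ such that the category $\mathcal D:={}_A\C_A$ of $A$-bimodules in $\C$ is nilpotent. Since Morita equivalent fusion categories share the same Frobenius--Perron dimension, $\FPdim\mathcal D = \FPdim\C$ is odd. Because solvability is invariant under Morita equivalence by \cite[Proposition 4.5 (i)]{ENO2}, it suffices to prove that $\mathcal D$ itself is solvable; in fact we will show that $\mathcal D$ is cyclically nilpotent.

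Fix a nilpotency sequence $\vect = \mathcal D_0,\mathcal D_1,\dots,\mathcal D_n = \mathcal D$ together with finite groups $G_1,\dots,G_n$ such that $\mathcal D_i$ is a faithful $G_i$-extension of $\mathcal D_{i-1}$. The relation $\FPdim\mathcal D_i = |G_i|\,\FPdim\mathcal D_{i-1}$ forces each $\FPdim\mathcal D_i$ to divide $\FPdim\mathcal D$, so every $\FPdim\mathcal D_i$, and in particular every $|G_i|$, is odd. By the Feit--Thompson theorem \cite{feit-thompson}, each $G_i$ is then solvable and admits a subnormal series $1 = N_{i,0}\subset N_{i,1}\subset\dots\subset N_{i,k_i} = G_i$ with every successive quotient $N_{i,j+1}/N_{i,j}$ cyclic of prime order.

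The key step is to refine each $G_i$-extension by means of these series. Given a faithful $G_i$-grading $\mathcal D_i = \bigoplus_{g\in G_i}(\mathcal D_i)_g$ and a normal subgroup $N$ of $G_i$, the direct sum $\bigoplus_{g\in N}(\mathcal D_i)_g$ is a fusion subcategory which is a faithful $N$-extension of $\mathcal D_{i-1}$, and $\mathcal D_i$ is in turn a faithful $G_i/N$-extension of this subcategory. Iterating this observation along the subnormal series $\{N_{i,j}\}_{j}$ splits each step $\mathcal D_{i-1}\subset\mathcal D_i$ into a chain of extensions by cyclic groups of prime order. Concatenating the refined chains for $i = 1,\dots,n$ produces a cyclic nilpotency sequence for $\mathcal D$, showing that $\mathcal D$ is cyclically nilpotent and hence that $\C$ is solvable.

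The principal non-routine input is the Feit--Thompson theorem, as anticipated in the introduction. The only remaining technical point is the intermediate-fusion-subcategory observation used in the refinement step, which is a direct consequence of the axioms of a faithful grading and presents no real obstacle.
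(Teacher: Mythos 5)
Your proof is correct, and it shares the paper's essential skeleton: reduce by Morita invariance to a nilpotent category of the same (odd) Frobenius--Perron dimension, observe that every grading group occurring in the nilpotent filtration has odd order, and invoke Feit--Thompson. Where you diverge is in how you convert ``all grading groups are solvable'' into solvability of the category. The paper's proof inducts on $\FPdim$ and cites \cite[Proposition 4.5 (i)]{ENO2} for the closure of the class of solvable categories under $G$-extensions by solvable groups. You instead verify the definition of solvability directly: you refine each $G_i$-extension along a composition series $1=N_{i,0}\subset\cdots\subset N_{i,k_i}=G_i$ with cyclic quotients of prime order, using the (correct, and routine) fact that for a faithful $G$-grading and $N\trianglelefteq G$ the sum of the components over $N$ is a fusion subcategory faithfully graded by $N$, while the ambient category is a faithful $G/N$-extension of it. This produces an explicit cyclic nilpotency filtration of the Morita-dual category, so $\C$ is solvable by definition. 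Your route is slightly more self-contained, needing only Morita invariance of $\FPdim$ and the definition of (cyclic) nilpotency rather than the extension-closure property of solvable categories; the paper's induction is shorter but leans more heavily on \cite{ENO2}. Both arguments are valid.
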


Note that since  $\FPdim \mathcal C$ is an odd integer, the fusion category $\C$ is  integral. See \cite[Corollary 2.22]{DGNO1}.


\begin{proof} By definition, $\C$ is Morita equivalent to a nilpotent fusion category.   Then, by \cite[Proposition 4.5 (i)]{ENO2}, it will be enough to show that a \emph{nilpotent} fusion category of odd Frobenius-Perron dimension is solvable. So, assume that $\C$ is nilpotent, so that $\C$ is a $G$-extension of a fusion subcategory $\widetilde C$, with $|G| > 1$. In particular, $\FPdim \C = |G| \FPdim \widetilde \C$. Hence the order of $G$ and $\FPdim \widetilde \C$ are odd and $\FPdim \widetilde \C < \FPdim \C$.  The theorem follows by induction, since  by the Feit-Thompson theorem, $G$ is solvable. See \cite[Proposition 4.5 (i)]{ENO2}.\end{proof}



\subsection{Braided fusion categories}\label{braided-fusion}

We shall need the following lemma whose proof is contained in the
proof of \cite[Proposition 6.2 (i)]{ENO2}.  We include a sketch of
the argument for the sake of completeness.

\begin{lemma}\label{cd-cg} Let $\C$ be a fusion category and let $G$ be a finite group acting on $\C$ by
tensor autoequivalences. Assume  $\cd(\C^G) \subseteq \{ p^m:\, m
\geq 0\}$, where $p$ is a prime number. Then $\cd(\C) \subseteq \{
p^m:\, m \geq 0\}$.
\end{lemma}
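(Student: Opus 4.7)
The plan is to exploit the forgetful functor $F\colon \C^G \to \C$, which is a dominant tensor functor and hence preserves Frobenius--Perron dimensions, so $\FPdim F(Y)=\FPdim Y$ for every $Y\in \C^G$. First I would invoke the standard description of the simple objects of an equivariantization: they are parametrized by pairs $(X,\pi)$, where $X$ runs over representatives of the $G$-orbits in $\Irr(\C)$ and $\pi$ is an irreducible $\alpha_X$-projective representation of the stabilizer $G_X\subseteq G$, for a certain $2$-cocycle $\alpha_X$ determined by the action. Under this parametrization, the simple object $Y=Y_{(X,\pi)}\in \C^G$ satisfies
$$F(Y_{(X,\pi)}) \;=\; \dim(\pi)\,\bigoplus_{X'\in G\cdot X} X'.$$

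Now fix an arbitrary $X\in \Irr(\C)$ and pick any simple $Y\in \C^G$ with $\Hom_\C(X,F(Y))\neq 0$; such a $Y$ exists by dominance of $F$, since every $X$ is a summand of $F(R(X))=\bigoplus_{g\in G}g\cdot X$, where $R$ is the right adjoint of $F$. Applying $\FPdim$ to the displayed formula and using that $F$ preserves Frobenius--Perron dimensions yields
$$\FPdim Y \;=\; \dim(\pi)\,|G\cdot X|\,\FPdim X,$$
so $\FPdim Y/\FPdim X$ is a positive integer.

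Finally, the hypothesis $\cd(\C^G)\subseteq\{1,p\}$ gives $\FPdim Y\in\{1,p\}$, so $\FPdim X$ is a positive rational number dividing $\FPdim Y$; being a Frobenius--Perron dimension, $\FPdim X$ is an algebraic integer, and hence a positive integer divisor of some element of $\{1,p\}$. Since $p$ is prime, this forces $\FPdim X\in\{1,p\}$, proving $\cd(\C)\subseteq\{1,p\}$. The only nontrivial point is justifying the explicit formula for $F(Y_{(X,\pi)})$, which is precisely the content of the proof of \cite[Proposition 6.2 (i)]{ENO2}; once that is granted, everything else is an elementary divisibility argument.
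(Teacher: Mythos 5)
Your proposal is correct and follows essentially the same route as the paper: both reduce the claim to the formula $\FPdim$ of a simple equivariant object $=\deg(\pi)\,[G:G_X]\,\FPdim X$ from the proof of \cite[Proposition 6.2]{ENO2}, combined with surjectivity of the forgetful functor and the fact that a Frobenius--Perron dimension is an algebraic integer $\geq 1$, so a positive integer once it is rational. The only differences are notational and that you spell out the decomposition of $F(Y_{(X,\pi)})$ which the paper cites implicitly.
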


\begin{proof} Regard $\C$ as an indecomposable module category over itself via tensor product,
and similarly for $\C^G$.  Let $Y$ be a  simple object of $\C$.
Since the forgetful functor $F: \C^G \to \C$ is surjective, $Y$ is
a simple constituent of $F(X)$, for some  simple object $X$ of
$\C^G$.

Since $F$ is a tensor functor, we have $\FPdim X = \FPdim F(X)$. By Formula (7) in \cite[Proof of Proposition 6.2]{ENO2},
\begin{equation}\FPdim (X) = \deg(\pi) [G: G_Y] \FPdim Y,\end{equation}
where $G_Y \subseteq G$ is the stabilizer of $Y$ and $\pi$ is an
irreducible representation of $G_Y$ associated to $X$. Therefore
$\FPdim Y$ divides $\FPdim X$.

The assumption on $\C^G$ implies that $\FPdim X$ is a power of
$p$. Then so is $\FPdim Y$. This proves the lemma.
\end{proof}

\begin{theorem}\label{braided-odd} Let $\mathcal C$ be braided fusion category
such that $\cd(\mathcal C) \subseteq \{ p^m:\, m \geq 0\}$, where
$p$ is a prime number. Assume that $\FPdim \C$ is odd. Then
$\mathcal C$ is solvable.
\end{theorem}

\begin{proof} By induction on $\FPdim \C$.
(Notice that the Frobenius-Perron dimension of a fusion
subcategory of $\C$  divides the dimension of $\C$
\cite[Proposition 8.15]{ENO}, and the same is true for the
Frobenius-Perron dimension of a fusion category $\mathcal D$ such
that there exists a surjective tensor functor $\C \to \mathcal D$
\cite[Corollary 8.11]{ENO}. Thus these fusion categories are
odd-dimensional as well.) If $\cd(\mathcal C) = \{ 1\}$, then $\C$
is pointed. Then $\C \simeq \C(G, \omega)$ for some abelian group
$G$ and some $3$-cocycle $\omega$ on $G$. Then $\C$ is solvable,
by \cite[Proposition 4.5 (ii)]{ENO2}.

\medbreak Suppose next that $\C$ is not pointed. Then all
non-invertible objects in $\C$ have Frobenius-Perron dimension
$p^m$, for some $m \geq 1$. Consider the group $G(\C)$ of
invertible objects of $\C$. Then $G(\C)$ is abelian and $G(\C)
\neq 1$, as follows by taking Frobenius-Perron dimensions in a
decomposition of the tensor product $X \otimes X^*$, for some
simple non-invertible object $X$.

Let us regard $\C$ as a premodular fusion category with respect to
its cano\-nical spherical structure (as $\FPdim \C$ is an integer).
Then $\C$ is modularizable, in view of \cite[Lemma
7.2]{tensor-exact}.

\medbreak Let $\widetilde \C$ be its modularization, which is a
modular category over $k$. Then $\C$ is an equivariantization $\C
\simeq \widetilde \C^G$ with respect to the action of a certain
group $G$ on $\widetilde \C$ \cite{bruguieres}. (Indeed, the
modularization functor $\C \to \widetilde  \C$ gives rise to an
exact  sequence of fusion categories $\Rep G \to \C \to \widetilde
\C$, which comes from an equivariantization; see  \cite[Example
5.33]{tensor-exact}.)

By construction of $G$, the category $\Rep G$ is the (tannakian)
fusion subcategory of transparent objects in $\C$. Therefore there
is an embedding of braided fusion categories $\Rep G \subseteq
\C$. In particular, the order of $G$ is odd, implying that $G$ is
solvable.

\medbreak By  Lemma \ref{cd-cg}, $\cd (\widetilde \C) \subseteq
\{p^m:\, m \geq 0\}$. Then, by induction, and since an
equi\-variantization of a solvable fusion category under the
action of a solvable group is again solvable, we may and shall
assume in what follows that $\C = \widetilde \C$ is modular.

\medbreak It is shown in \cite[Theorem 6.2]{gel-nik} that the
universal grading group $U(\C)$ is (abelian and) isomorphic to the
group $\widehat{G(\C)}$ of characters of $G(\C)$. In particular,
$U(\C) \neq 1$. On the other hand, $\C$ is a $U(\C)$-extension of
its fusion subcategory $\C_{\ad}$. Since also $\cd(\mathcal
C_{\ad}) \subseteq \{p^m:\, m \geq 0\}$, then $\C_{\ad}$ is
solvable, by induction. Therefore $\C$ is solvable, as
claimed.\end{proof}

\bibliographystyle{amsalpha}

\begin{thebibliography}{A}

\bibitem{BK} {\sc B.Bakalov} and {\sc A.Kirillov Jr.} \emph{Lectures on Tensor categories and modular functors},
University Lecture Series \textbf{21}, Am. Math. Soc., Providence, 2001.

\bibitem{BN} \textsc{J. Bichon} and {\sc S. Natale}, \emph{Hopf algebra deformations of binary polyhedral groups},
Transf. Groups \textbf{16} (2) 339--374 (2011).

\bibitem{bruguieres} {\sc A. Brugui\`{e}res}, \emph{Cat\' egories pr\'
emodulaires, modularisations et invariants des vari\' et\' es de
dimension $3$}, Math. Ann. {\bf 316}, 215--236 (2000).

\bibitem{tensor-exact} \textsc{A. Bruguières} and \textsc{S. Natale}, \emph{Exact sequences of tensor categories},  Int. Math. Res. Not. \texttt{doi:10.1093/ imrn/rnq294}, 1-62 (2011).

\bibitem{DGNO1}  {\sc V. Drinfeld}, {\sc S. Gelaki} , {\sc D. Nikshych} and {\sc V. Ostrik},
\emph{On braided fusion categories I}, Sel. Math. New Ser. \textbf{16}, 1--119 (2010).

\bibitem{eg-triangular} {\sc P. Etingof} and {\sc S. Gelaki}, \emph{The classification of finite dimensional
triangular Hopf algebras over an algebraically closed field of
characteristic 0}, Mosc. Math. J. \textbf{3}, 37--43 (2003).


\bibitem{ENO}  {\sc P. Etingof}, {\sc D. Nikshych} and {\sc V. Ostrik},
\emph{On fusion categories}, Annals of Mathematics \textbf{162}, 581-642  (2005).

\bibitem{ENO2}  {\sc P. Etingof}, {\sc D. Nikshych} and {\sc V. Ostrik},
\emph{Weakly group-theoretical and solvable fusion categories}, Adv. Math. \textbf{226}, 176--205 (2011).

\bibitem{feit-thompson} \textsc{W. Feit} and \textsc{J. Thompson}, \emph{Solvability of groups of odd order}, Pacific J. Math. \textbf{13},
775--1029 (1963).

\bibitem{gel-naidu} \textsc{S. Gelaki} and \textsc{D. Naidu}, \emph{Some properties of group-theoretical categories},
J. Algebra \textbf{322}, 2631-2641  (2009).

\bibitem{gel-nik} \textsc{S. Gelaki} and \textsc{D. Nikshych}, \emph{Nilpotent fusion categories},
Adv. Math. \textbf{217}, 1053-1071 (2008).


\bibitem{isaacs} {\sc I. Isaacs},
\emph{Character theory of finite groups}, Pure and Applied Mathematics \textbf{69}, Academic Press, New York, 1976.

\bibitem{IK} {\sc M. Izumi} and {\sc H. Kosaki}
\emph{Kac algebras arising from composition of subfactors: general theory and classification}, Mem. Amer. Math. Soc. \textbf{158}, 750, (2007).

\bibitem{KMM}  {\sc Y. Kashina}, {\sc G. Mason} and {\sc S. Montgomery},
\emph{Computing the Frobenius-Schur indicator  for  abelian
extensions of Hopf algebras},   J.  Algebra {\bf 251},
888--913 (2002).

\bibitem{masuoka-6-8}  {\sc A. Masuoka},  \emph{Semisimple Hopf algebras of dimension 6, 8}, Israel J. Math. \textbf{92}, 361--373
(1995).

\bibitem{ma-ext}  {\sc A. Masuoka},  \emph{Extensions of Hopf algebras},
Trabajos de  Matem\' atica \textbf{41/99}, Universidad Nacional de
C\' ordoba, 1999.

\bibitem{ma-ext-cohom}  {\sc A. Masuoka},  \emph{Hopf algebra extensions and cohomology},
Math. Sci. Res. Inst. Publ. \textbf{43}, 167--209 (2002).

\bibitem{MW} \textsc{S. Montgomery}  and  \textsc{S. Whiterspoon},
\emph{Irreducible representations of crossed products},  J. Pure Appl. Algebra \textbf{129},
315--326 (1998).

\bibitem{pqq} \textsc{S. Natale}, \emph{On semisimple Hopf algebras
of dimension $pq^2$},   J. Algebra {\bf 221}, 242--278 (1999).

\bibitem{gp-ttic} {\sc S. Natale}, \emph{On group-theoretical Hopf
algebras and exact factorizations of finite groups}, J.  Algebra
{\bf 270}, 199-211 (2003).

\bibitem{fs-indic} \textsc{S. Natale}, \emph{Frobenius-Schur indicators
for a class of fusion categories}, Pacific J. Math. \textbf{221}, 353--378 (2005).

\bibitem{qt-quotient} \textsc{S. Natale}, \emph{R-matrices and Hopf algebra quotients}, Int. Math.
Res. Not. \textbf{2006}, 1--18  (2006).

\bibitem{exp-dpr} {\sc S. Natale}, \emph{On the exponent of tensor categories coming from finite groups}, Israel J. Math. \textbf{162}, 253--273 (2007).

\bibitem{ssld} {\sc S. Natale},
\emph{Semisolvavility of Semisimple Hopf Algebras of Low Dimension}, Mem. Amer. Math. Soc. \textbf{186}, 874, (2007).


\bibitem{Hopfss-IFUM} \textsc{S. Natale}, \textsl{Semisimple Hopf algebras and their representations}, to appear in Publ. Mat. Uruguay  (2010).

\bibitem{NR}  {\sc W. Nichols} and {\sc M. Richmond}, \emph{The Grothendieck group of a Hopf algebra},
J. Pure Appl. Algebra {\bf 106}, 297--306 (1996).

\bibitem{NZ}  {\sc W. Nichols} and {\sc M. Zoeller}, \emph{A Hopf
algebra freeness Theorem}, Amer. J.  Math. {\bf 111},
381--385 (1989).

\bibitem{ostrik} {\sc V. Ostrik},
\emph{Module categories over the Drinfeld double of a finite
group}, Int. Math. Res. Not. {\bf 2003},  1507--1520 (2003).

\bibitem{radford} {\sc D. Radford}, \emph{On the antipode of a quasitriangular Hopf algebra}, J. Algebra \textbf{151}, 1--11 (1992).


\bibitem{R} {\sc D. Radford}, \emph{Minimal quasitriangular Hopf algebras},
J. Algebra {\bf 157},   285--315   (1993).


\bibitem{re-se} {\sc N. Reshetikhin} and {\sc M. Semenov-Tian-Shansky}
\emph{Quantum $R$-matrices and factorization problems},  J. Geom.
Phys. \textbf{5}, 533--550 (1988).

\bibitem{robinson} {\sc D. Robinson},
\emph{A course in the theory of groups}, Graduate Texts Math. \textbf{80}, Springer-Verlag,  Berlin, 1982.

\bibitem{schneider} {\sc H.-J. Schneider}, \emph{Some properties of factorizable Hopf algebras},
Proc. Amer. Math. Soc. \textbf{29}, 1891--1898 (2001).

\bibitem{seitz} {\sc G. Seitz}, \emph{Finite groups having only one irreducible representation of degree greater than one},
Proc. Amer. Math. Soc. \textbf{19}, 459--461 (1968).

\bibitem{Siehler} {\sc J. Siehler},
\emph{Near-group categories}, Algebr. Geom. Topol. \textbf{3}, 719--775 (2003).

\bibitem{So} {\sc Y. Sommerhäuser},
\emph{Yetter-Drinfel'd Hopf algebras over groups of prime order}, Lectures Notes in Math. \textbf{1789}, Springer-Verlag (2002).

\bibitem{TY} \textsc{D. Tambara} and \textsc{S. Yamagami}, \emph{Tensor categories with fusion rules
of self-duality for finite abelian groups}, J. Algebra
\textbf{209}, 692--707 (1998).

\bibitem{wielandt} {\sc H. Wielandt}, \emph{Uber Produkte von nilpotenten Gruppen}, Illinois J. Math. \textbf{2}, 611-618 (1958).

\bibitem{s-zhu} \textsc{S. Zhu},  \emph{On finite dimensional semisimple Hopf algebras}, Commun. Algebra \textbf{21},
3871-3885 (1993).


\end{thebibliography}

\end{document}